\newtheorem{theorem}{Theorem}[section]
\newtheorem{proposition}[theorem]{Proposition}
\newtheorem{lemma}[theorem]{Lemma}
\newtheorem{definition}[theorem]{Definition}
\newtheorem{remark}[theorem]{Remark}
\numberwithin{equation}{section}
\newcommand{\bydef}{\stackrel{\mbox{\tiny\textnormal{\raisebox{0ex}[0ex][0ex]{def}}}}{=}\,} 
\newcommand{\Z}{\mathbb{Z}}
\newcommand{\N}{\mathbb{N}}
\newcommand{\C}{\mathbb{C}}
\newcommand{\R}{\mathbb{R}}
\newcommand{\imag}{\text{\textup{i}}\:\!}
\newcommand{\bs}[1]{\boldsymbol{#1}}
\newcommand{\ampl}{\text{\textup{ampl}}}
\newcommand{\hx}{\hat{x}}
\newcommand{\hl}{\hat{\lambda}}
\newcommand{\hv}{\hat{v}}
\newcommand{\hr}{\hat{r}}
\newcommand{\G}{\mathcal{G}}
\newcommand{\s}{\textup{\texttt{s}}}
\newcommand{\phase}{{\leftmoon}}
\newcommand{\cont}{{\Sun}}
\newcommand{\ameq}{{\Earth}}
\newcommand{\sstar}{s_\star}
\newcommand{\mustar}{\mu_\star}
\newcommand{\sol}[1]{\mathring{#1}}
\newcommand{\pp}{\phi}
\newcommand{\yy}{\psi}
\newcommand{\stat}[1]{\breve{#1}}
\newcommand{\XS}{\mathcal{S}}
\newcommand{\tH}{\widetilde{H}}
\newcommand{\hH}{\widehat{H}}
\newcommand{\Q}{\mathcal{Q}}
\newcommand{\QQ}{\mathcal{R}}
\newcommand{\PP}{\mathcal{P}}
\newcommand{\A}{A}
\DeclareMathOperator{\range}{Range}
\newcommand{\K}{\text{\textup{K}}}
\newcommand{\y}{y}
\newcommand{\sy}{\stat{\y}}
\newcommand{\Y}{\widetilde{X}}
\newcommand{\LL}{\mathcal{L}}
\newcommand{\checkwstar}{\check{w}_{\star}}
\newcommand{\element}{x}
\def\corcommstyle{\bf\small\tt}
\def\corrl #1<<#2||#3>>{
\if\visiblecomments y
  \begin{quote} {\corcommstyle $<<$COMMENT$>>$ {\color{red}#1\marginpar{!!}}\\$<<$OLD$<<$} \end{quote}

{\color{red} 
 #2
 }

  \begin{quote} {\corcommstyle ==NEW== } \end{quote}
   \noindent\hrulefill
 
\vspace{-10pt} 
 
 \noindent\hrulefill
 
 \vspace{-10pt} 
 
 \noindent\dotfill
 
  #3
  
   \noindent\dotfill 

\vspace{-10pt} 
 
 \noindent\hrulefill
 
 \vspace{-10pt} 
 
 \noindent\hrulefill
  \begin{quote} {\corcommstyle $>>$END$>>$ } \end{quote}
 \else
%%%%%%%%%%%%%% READ ONLY VERSION %%%%%%%%%%%%%%%%
  #3
 \fi
}
\long\def\longcorrl #1<<#2||#3>>{
\if\visiblecomments y
  \begin{quote} {\corcommstyle $<<$COMMENT$>>$ {\color{red}#1\marginpar{!!}}\\$<<$OLD$<<$} \end{quote}
 
 {\color{red}

  #2
  
  }
  
  \begin{quote} {\corcommstyle ==NEW== } \end{quote}
  
    \noindent\hrulefill
 
\vspace{-10pt} 
 
 \noindent\hrulefill
 
 \vspace{-10pt} 
 
 \noindent\dotfill
 
  #3
  
   \noindent\dotfill 

\vspace{-10pt} 
 
 \noindent\hrulefill
 
 \vspace{-10pt} 
 
 \noindent\hrulefill
%%%%%%%%%%%%%% READ ONLY VERSION %%%%%%%%%%%%%%%%
  \begin{quote} {\corcommstyle $>>$END$>>$ } \end{quote}
 \else
  #3
 \fi
}
\def\mlabel #1
\def\flabel #1
\def\corrq #1<<#2>>{
\if\visiblecomments y
  \begin{quote} {\corcommstyle $<<$COMMENT$>>$ #1\marginpar{!!}\\$<<$BEG$<<$} \end{quote}
  \noindent\hrulefill
 
\vspace{-10pt} 
 
 \noindent\hrulefill
 
 \vspace{-10pt} 
 
 \noindent\dotfill
 
 {\color{red}
  
  #2
  
  }
   
  \noindent\dotfill 

\vspace{-10pt} 
 
 \noindent\hrulefill
 
 \vspace{-10pt} 
 
 \noindent\hrulefill 
  \begin{quote} {\corcommstyle $>>$END$>>$ } \end{quote}
 \else
  #2
 \fi
}
\long\def\longcorrq #1<<#2>>{
\if\visiblecomments y
  \begin{quote} {\corcommstyle $<<$COMMENT$>>$ #1\marginpar{!!}\\$<<$BEG$<<$} \end{quote}
  \noindent\hrulefill
 
\vspace{-10pt} 
 
 \noindent\hrulefill
 
 \vspace{-10pt} 
 
 \noindent\dotfill

  #2

  \noindent\dotfill 

\vspace{-10pt} 
 
 \noindent\hrulefill
 
 \vspace{-10pt} 
 
 \noindent\hrulefill 
  \begin{quote} {\corcommstyle $>>$END$>>$ } \end{quote}
 \else
  #2
 \fi
}
\def\corrc #1<<>>{
\if\visiblecomments y
  \begin{quote} {\corcommstyle $<<$COMMENT$>>$ \color{red} #1\marginpar{!!}} \end{quote}
\fi
}
\def\corre #1<<#2||#3>>{
\if\visiblecomments y
  #3\marginpar{\corcommstyle #1}
 \else
  #3
 \fi
}
\long\def\longcorre #1<<#2||#3>>{
\if\visiblecomments y
  #3\marginpar{\corcommstyle #1}
 \else
  #3
 \fi
}
\def\corrs #1<<#2||#3>>{
\if\visiblecomments y
  #3\marginpar{\corcommstyle #2 $\rightarrow$ #3\\ #1}
 \else
  #3
 \fi
}
\def\corro #1<<#2||#3>>{
#2}
\def\corrn #1<<#2||#3>>{
#3}
\long\def\longcorro #1<<#2||#3>>{
%%%%%%%%%%%%%% READ ONLY VERSION %%%%%%%%%%%%%%%%
#2}
\long\def\longcorrn #1<<#2||#3>>{
#3}
\long\def\underconstruction #1<<<#2>>>{
\if\visiblecomments y
%%%%%%%%%%%%%% READ ONLY VERSION %%%%%%%%%%%%%%%%
  \begin{quote} {\corcommstyle $<<$UNDER CONSTRUCTION - BEGIN$>>$ #1\marginpar{!!}} \end{quote}
  #2
  \begin{quote} {\corcommstyle $>>$UNDER CONSTRUCTION - END$>>$ } \end{quote}
 \else
 \fi
}
\def\showcomments{
  \let\visiblecomments y
}
\def\hidecomments{
  \let\visiblecomments n
}
\begin{document}
\title{Rigorous verification of Hopf bifurcations\\ via desingularization and continuation}

\author{
Jan Bouwe van den Berg 
\thanks{
Department of Mathematics, 
VU Amsterdam, 
1081 HV Amsterdam, 
The Netherlands, {\tt janbouwe@few.vu.nl};
partially supported by NWO-VICI grant 639033109.
}
\and
Jean-Philippe Lessard 
\thanks{
Department of Mathematics and Statistics, 
McGill University, 
805 Sherbrooke St W, 
Montreal, QC, H3A 0B9, 
Canada, {\tt jp.lessard@mcgill.ca}; supported by NSERC. 
}
\and 
Elena Queirolo
\thanks{
Department of Mathematics,
Rutgers University,
110 Frelinghuysen Road,
Piscataway, NJ 08854-8019, 
USA.
{\tt elena.Queirolo@rutgers.edu}.
}
}

\date{\today}

\maketitle

\begin{abstract}
In this paper we present a general approach to rigorously validate Hopf bifurcations as well as saddle-node bifurcations of periodic orbits in systems of ODEs. By a combination of analytic estimates and computer-assisted calculations, we follow solution curves of cycles through folds, checking along the way that a single nondegenerate saddle-node bifurcation occurs. Similarly, we rigorously continue solution curves of cycles starting from their onset at a Hopf bifurcation. We use a blowup analysis to regularize the continuation problem near the Hopf bifurcation point. This extends the applicability of validated continuation methods to the mathematically rigorous computational study of bifurcation problems.  
\end{abstract}

\begin{center}
{\bf \small Keywords} \\ \vspace{.05cm}
{ \small Hopf bifurcation $\cdot$ continuation $\cdot$ desingularization $\cdot$ computer-assisted proofs }
\end{center}

\begin{center}
{\bf \small Mathematics Subject Classification (2010)}  \\ \vspace{.05cm}
{\small 37G15 $\cdot$ 65P30 $\cdot$ 65G40 $\cdot$ 34C25 $\cdot$ 37C27 } 
\end{center}

\section{Introduction}
\label{s:intro}
%!TEX root = bifcont.tex

% \corrc I proposed to change $M' \to 1$, $N' \to n$ and $M''$ dissapeared. JP <<>>
% {\color{green}Elena's comments}

% \corrc \bf I think $\omega$ is in fact $\frac{L}{2\pi}$, i.e., the "normalized" period, and not the frequency, right? This mistake seems to have crept into the continuation paper ....arggh.... The name $\omega$ is then perhaps not very suitable? Perhaps replace it by $\tau$? JB <<>>
%
% \corrc I made this change; there are still a couple of places where I need to look at the formulation. Also, the time-axis in most plots needs to be repaired, since $\omega$ was not the frequency but the normalized period, i.e., $t \in [0, 2\pi \tau]$ should be the $x$-axis. JB <<>>

In dynamical systems, bifurcations are of key importance to understand global parameter dependence of the dynamics. The analysis of bifurcations by pen and paper is generally restricted to cases where the solution at the bifurcation point is known analytically, and even in such cases it is often not feasible to examine properly the associated eigenvalue problem by hand. 
Hence numerical methods are applied ubiquitously to study bifurcation diagrams,
for example using specialized software such as AUTO~\cite{soft-auto}, MatCont~\cite{soft-matcont}, PyDSTool~\cite{soft-pydstool},  
XPP~\cite{soft-xpp} and COCO~\cite{soft-coco}.
This involves both the numerical continuation of solutions as well as the computational analysis of bifurcation points. To make such numerical simulations into rigorous mathematical statements, additional effort is required. For this purpose rigorous verification schemes (sometimes referred to as {\em a posteriori error analysis}) have been developed for a variety of continuation problems in  the past decade, 
%see~\cite{MR3444942,gomez_survez,MR1420838,jay_konstantin_survey,MR1849323,Plu01,MR2652784,MR2807595} 
see~\cite{koch_arioli,breden-vanicat,GLPmulti,BLM,QJBcontinuation,BWbifdia} 
and the references therein. The analogous methodology for bifurcation problems is much less developed, although some foundational results on pitchfork and saddle-node have been obtained in~\cite{koch_arioli,Kanzawa_Oishi,MR3542954,MR3808252,MR3792794,MR2534406,MR3390404}. Moreover, double turning points \cite{MR2319947,MR2059468}, period doubling bifurcations \cite{MR2534406} and cocoon bifurcations  \cite{MR2351028} have also been considered. %A weaker (topological) notion of bifurcations for steady states of nonlinear partial differential equations is also combined in \cite{MR2788927} with rigorous numerics. 

In this paper we develop a general framework for the rigorous verification of Hopf bifurcations. We consider the class of polynomial vector fields
\begin{equation}\label{e:initial}
\dot{u}=f(u,\mu)
\end{equation}
where $u \in  \mathbb{R}^n$, $\mu \in \mathbb{R}$ represents the parameter in the system and $f:\R^n \times \R \to \R^n$ is polynomial both in $u$ and $\mu$.
%The appended $g: \R^{M'} \to \R^{M''}$ represents imposed relations between the parameters.  
In our presentation, we choose the parameter $\mu$ to be one-dimensional, as generically it is the appropriate condition for curves of periodic orbits to exist.
%\begin{equation}\label{e:initial}
%  \left\{ \begin{array}{l} 
%    \dot{u}=f(\mu,u) ,\\
%    g(\mu)=0,
%  \end{array} \right.
%\end{equation}
%%
%where $u(t) \in  \mathbb{R}^{N'}$, and $\mu \in \mathbb{R}^{M'}$ represents the parameters in the system. 
%The appended $g: \R^{M'} \to \R^{M''}$ represents imposed relations between the parameters.  
%A dimension count shows that generically $M'=M''+1$ is the appropriate condition for curves of periodic orbits to exist.
%Throughout the paper we will thus assume that $M''=M'-1$.
%We assume both $f$ and $g$ to be polynomial.
The polynomial dependence of $f$ on $u$ means that the estimates from~\cite{QJBcontinuation} apply. We note that~\cite{QJBcontinuation} includes an extensive discussion of the technical advantages that polynomial vector fields offer, as well as an array of generalizations.
% The main reason to restrict attention to polynomial dependence of $f$ on the parameter $\mu$ is that for the implementation we build on the rigorous
% continuation code developed in~\cite{QJBcontinuation}.
Indeed, the ideas in the current paper can be carried through for nonpolynomial vector fields, but that will require some supplemental effort. For example, by introducing new variables it is possible to transform a nonpolynomial vector field into a higher dimensional polynomial vector field (e.g. see \cite{MR3770054,MR633878,MR3545977}), and then apply a slight modification of the approach proposed in the current paper. 
Additionally, the presented approach can be extended to infinite dimensional dynamical systems described by delay-differential equations or parabolic partial differential equations, which is work in progress.

While our primary aim is the study of Hopf bifurcations, along the way we develop a technique to rigorously establish non-degeneracy of fold bifurcations for periodic orbits. Indeed, using a blowup strategy we convert the Hopf bifurcation problem into a regular continuation problem. A simpler version of this desingularizaton technique was already used in computationally analyzing the periodic solutions near the Hopf bifurcation in Wright's delay equation in~\cite{Ldelay}, see also~\cite{JBJono} for a similar but essentially analytic version.
A fold in the associated continuation problem corresponds to a Hopf bifurcation in the original bifurcation problem.

% \corrc I added a bit more below to address the second question of JB at the end of the Introduction. JP <<>>
%

The main contribution of this paper is a flexible and mathematically rigorous computational framework to study folds of periodic orbits and Hopf bifurcations in ODEs. In particular, the blowup technique allows computing a smooth global branch of periodic orbits starting from a Hopf bifurcation point. 
In future work we plan to adapt the blowup technique to other symmetry breaking bifurcations (e.g. pitchfork and period-doubling) yielding rigorous computations of global smooth branches of periodic orbits starting from such bifurcation points.

We now introduce the main ingredients of the paper, referring to subsequent sections for precise statements.
Since in general the period $L$ of the periodic solutions is a priori unknown and depends on the parameters, it is convenient to rescale time and add the (normalized) period $\tau = \frac{L}{2\pi}$  to the set of parameters $\lambda=(\tau,\mu) \in \R^{2}$ to arrive at the system
\begin{equation}\label{e:prelim}
  \left\{ \begin{array}{l} \dot{u} = \tilde{f}(u,\lambda),\\
%  g(\lambda)=0 \\
  u \text{ is $2\pi$-periodic},
  \end{array}\right.
\end{equation}
where the derivative is now with respect to the new time variable, and
$\tilde{f}(u,\lambda) \bydef \tau f(u,\mu)$.
%and we write $g(\lambda)=g(\mu)$.

We break the translation invariance of solutions to the autonomous problem by appending a phase condition. 
Moreover, to describe curves of solutions, it is expedient to introduce a ``continuation'' equation, depending on a continuation parameter $s$. 
Since we will work primarily in Fourier space, it is convenient to choose a phase condition which depends on the Fourier modes rather than coordinates in phase space. Hence we introduce $\hat{u}$ to denote the Fourier coefficients of $u$. 
We note that without any essential loss of flexibility we restrict attention to 
phase and continuation equations that depend affine \emph{linearly} on \emph{finitely} many of the Fourier modes $\hat{u}$ and $\lambda$ only, while also the dependence on~$s$ may be affine linear.
This leads to a system
\begin{equation}\label{e:main}
  \left\{ \begin{array}{l} \dot{u} = \tilde{f}(u,\lambda), \\
  \tilde{g}_s(\hat{u},\lambda)=0, \\
  u \text{ is $2\pi$-periodic},
  \end{array}\right.
\end{equation}
where $\tilde{g}_s \in \R^{2}$ represents the two (phase and continuation) appended equations just discussed.
We expect to find curves of solutions to~\eqref{e:main}, parametrized by $s$, where both $u(t)$ and $\lambda=(\tau,\mu)$ are unknowns. In what follows, some additional ``phase'' conditions (to be specified later) will be absorbed into~$\tilde{g}_s$, for example in order to break natural continuous symmetries. 

For fixed $s$ the problem~\eqref{e:main} is expected to have isolated solutions,
and we may move to Fourier space to set up a corresponding fixed point map. We do this in Section~\ref{s:setup} and we reduce checking contractivity of this map in a ball of radius $r$ (in some appropriately chosen Banach space) around a numerical approximation of a solution, to checking finitely many inequalities.
All the bounds, parametrized by~$r$, necessary to verify contractivity have been formulated in great generality in~\cite{QJBcontinuation}.  The verification of contraction can then be carried out with a computer based on interval arithmetic calculations. To obtain a curve of solutions we apply the uniform contraction principle, with explicit error bound given by the smallest $r=r_{\min}$ for which we can prove (uniform) contractivity. This parametrized Newton-Kantorovich methodology is explained in more detail in Section~\ref{s:setup}.

Given a bounded interval $I \subset \R$, to check that the solution curve $\{(u(t;s),\lambda(s))\}_{s \in I}$ (with $\lambda(s)=(\tau(s),\mu(s))$) has a nondegenerate fold (or saddle-node) bifurcation with respect to $\mu$ we need to find an $\sstar\in I$ such that $\mu'(\sstar)=0 $ and $\mu''(\sstar) \neq 0 $. 
To find the values of the $s$-derivative, we 
differentiate~\eqref{e:main} twice with respect to~$s$, and consider the derivatives to be part of the set of unknowns. We arrive at an extended system
\begin{equation}\label{e:mainextended}
  \left\{ \begin{array}{l} \dot{\bs{u}} = \bs{\tilde{f}}(\bs{u},\bs{\lambda}), \\
  \bs{\tilde{g}}_s(\hat{\bs{u}},\bs{\lambda})=0, \\
  \bs{u} \text{ is $2\pi$-periodic},
  \end{array}\right.
\end{equation}
for $\bs{u}(t) \bydef (u,u',u'')(t) \in \R^{3n}$ and
$\bs{\lambda} \bydef  (\lambda,\lambda',\lambda'') \in \R^{6}$, where primes denote
derivatives with respect to~$s$. The extended $\bs{\tilde{f}}$ represents a vector
field on $\R^{3n}$, and $\bs{\tilde{g}}_s(\hat{\bs{u}},\bs{\lambda}) \in
\R^{6}$ incorporates 
%both algebraic equations (depending on $\bs{\lambda}$ only) and 
extended phase and continuation equations which depend affine
linearly on finitely many of the Fourier modes $\hat{\bs{u}}$ and~$\bs{\lambda}$, as well as affine linearly on $s$. We conclude
that~\eqref{e:mainextended} is thus of the same form as~\eqref{e:main}, hence
the continuation method from Section~\ref{s:setup} and~\cite{QJBcontinuation} still applies.
We then use the rigorous error control $r_{\min}$ to check that for some interval $[s_0,s_1]$ we have 
\[
\left\{
\begin{array}{l}
  \mu'(s_0) \,\mu'(s_1) < 0 \\[2mm]
  |\mu''(s)| >0, \qquad \text{for all }  s\in [s_0,s_1].
\end{array}
\right.
\]
This guarantees the existence of a single, nondegenerate fold bifurcation at some $\sstar \in (s_0,s_1)$ along the curve $\{(u(t;s),\lambda(s))\}_{s \in [s_0,s_1]}$. The sign of the second derivative controls the direction of the fold. The details of this construction are presented in Section~\ref{s:saddlenode}.
In Section~\ref{s:eigenvalues} we discuss the information which we can extract about the eigenvalue behaviour (and thus (in)stability of the periodic orbits).

To capture periodic orbits that bifurcate from an equilibrium at a Hopf bifurcation, we use a blowup procedure. We write $u(t)=\y+a \bar{u}(t)$, where $\y \in \R^n$ solves the equilibrium version of~\eqref{e:main}, and the ``amplitude''
$a \in \R$ is appended to the set of parameters, while simultaneously an additional ``amplitude'' condition, say $g^{\ampl}_s(\hat{\bar{u}})=0 \in \R$, is imposed, which embodies that $\bar{u}$ is order 1. The amplitude condition function $g^{\ampl}_s$ depends again affine linearly on finitely many Fourier modes.
%  \blue{Does it really depend on $\hat{\bar{u}}$ only, or also on some of the parameters? Like $w$?}\red{To be precise, the amplitude function is
% $$
% \|\hat{\bar{u}} \|^2= 1
% $$
% and does not depend on any extra parameter.
% }

Since $\y \in \R^n$ is a time-independent equilibrium solution, it also gets appended to the set of parameters:
\[
   \bar{\lambda} \bydef (\tau,a,\y,\mu) \in \R^{3+n},
\]
and correspondingly 
\[
  \bar{g}_s(\hat{\bar{u}},\bar\lambda) 
% = (\tilde{g}^0_s(\lambda,\hat{\bar{u}}),g^{\ampl}_s(\hat{\bar{u}}), \tilde{f}(w,\lambda) )\in \R^{3+n},
\bydef ( \tilde{g}_s^{\text{phase}}(\hat{\bar{u}}),g^{\ampl}_s(\hat{\bar{u}}), f(y,\mu),\tilde{g}_s^{\text{cont}}(\hat{\bar{u}},\bar{\lambda}) )\in \R^{3+n},
\]
where 
$\tilde{g}_s^{\text{phase}}=0$ and  $\tilde{g}_s^{\text{cont}}=0$ denote the phase condition and continuation equation, respectively. 
% $\tilde{g}_s^{\phase}(\lambda,\hat{\bar{u}})=0$ denotes the phase condition and $\tilde{g}_s^{\cont}(\bar{\lambda},\hat{\bar{u}})=0$ denotes the continuation equation.

%$\tilde{g}^0_s (\bar{\lambda},\hat{\bar{u}})= \tilde{g}_s(\lambda,\hat{\bar{u}}) \in \R^2$, except for the continuation equation, which generally involves all elements of $\bar{\lambda}$ (as well as $\hat{\bar{u}}$).
% {Note to self: cannot quite work, since phase condition would disappear at $a=0$}\red{
% In practice, $\tilde g_s(\lambda, x) = \langle \imag\boldsymbol{k} x, x \rangle$, that in our case becomes
% $$
% \langle \imag\textbf{k} (w+ax),(w+a x) \rangle = \langle \imag\boldsymbol{k} (ax),a x \rangle
% $$
% because the 0th Fourier mode is multiplied by its index, and is simplified to
% $$
% \langle \imag\boldsymbol{k} u, u \rangle = 0.
% $$
% Indeed, without simplification it wouldn't work for $a = 0$.
% }
The ODE for $\bar{u}(t) \in \R^{n}$ is given by
\begin{equation} \label{eq:bar_f_defn}
  \dot{\bar{u}} = \bar{f}(\bar{u},\bar{\lambda}) \bydef
  \begin{cases}
	   \displaystyle \frac{\tilde{f}(\y+a \bar{u},\lambda)-\tilde{f}(\y,\lambda) }{a}  & \text{if } a \neq 0,
	   \vspace{.2cm}	   
	   \\
	   D_u \tilde{f}(\y,\lambda)\bar{u}   & \text{if } a =0,
\end{cases}	   
\end{equation}
which represents a rescaled smooth vector field, and $\bar{f}$ is polynomial if $\tilde{f}$ is.
The new \emph{desingularized Hopf problem}
\begin{equation}\label{e:mainhopf}
  \left\{ \begin{array}{l} \dot{\bar{u}} = \bar{f}(\bar{u},\bar{\lambda}), \\
  \bar{g}_s(\hat{\bar{u}},\bar{\lambda})=0, \\
  \bar{u} \text{ is $2\pi$-periodic},
  \end{array}\right.
\end{equation}
is then again of the form~\eqref{e:main}, hence the continuation machinery from Section~\ref{s:setup} and~\cite{QJBcontinuation} is directly applicable. This leads to continuation of periodic solution ``through'' the Hopf bifurcation at $a=0$.
In order to show that the Hopf bifurcation is nondegenerate, and to determine its direction, the saddle-node construction~\eqref{e:mainextended} is then applied to~\eqref{e:mainhopf}. More precisely, this leads to the extended system
\begin{equation}\label{e:mainhopfextended}
  \left\{ \begin{array}{l} \dot{\bar{\bs{u}}} = \bs{\bar{f}}(\bar{\bs{u}},\bs{\bar{\lambda}}), \\
  \bs{\bar{g}}_s(\hat{\bar{\bs{u}}},\bs{\bar{\lambda}})=0, \\
  \bar{\bs{u}} \text{ is $2\pi$-periodic},
  \end{array}\right.
\end{equation}
for $\bar{\bs{u}}(t)=(\bar{u},\bar{u}',\bar{u}'')(t) \in \R^{3n}$ and
$\bs{\bar{\lambda}}=(\bar{\lambda},\bar{\lambda}',\bar{\lambda}'') \in \R^{3(3+n)}$, where primes denote
derivatives with respect to $s$. The details of this construction are given in Section~\ref{s:hopf}.
When continuing a branch of periodic orbits that originates from a Hopf bifurcation far away from the bifurcation point, one would like to switch back from the desingularized formulation~\eqref{e:mainhopf} to the original system~\eqref{e:prelim}. This topic is discussed briefly in Section~\ref{s:gluing}.

% In the second case, before being able to applying the gluing, we need to precompute the fixed point around which the orbit is circling, i.e. the fixed point undergoing the Hopf bifurcation. This can easily be achieved with a Newton method, using as starting point the geometrical center of the orbit.

\begin{figure}[t]
\centering
\includegraphics[width=\textwidth]{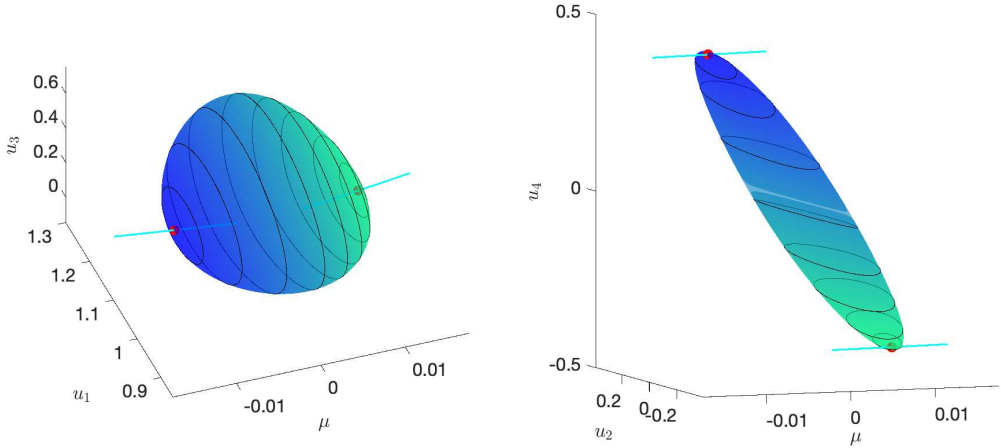}
\caption{A branch of periodic solutions of system \eqref{lorenz84intro} with varying $\mu$ and fixed parameters $(\xi_1,\xi_2,\xi_3,\xi_4,\xi_5,\xi_6)=(0.25,0.987,1,0.25,1.04,2)$. 
Validated continuation combined with gluing (see Section~\ref{s:gluing}) proves the existence of such a \emph{balloon} of periodic orbits, connecting two equilibria (red dots) undergoing the Hopf bifurcations at different parameter values. These two equilibria are not connected by continuation at the level of stationary states, nor related by symmetry, see Section~\ref{s:lorenz84} for more details. The branches of the equilibria undergoing the Hopf bifurcations are plotted in cyan. The balloon is colored by $\mu$-value.
}\label{f:lorHopf2Hopf}
\end{figure}

To illustrate our approach with an example, consider the extended Lorenz-84 system~\cite{KuznetsovMeijer,KuznetsovMeijervanVeen}
\begin{equation}\label{lorenz84intro}
	\begin{cases}
	\dot u_1 = -u_2^2-u_3^2 - \xi_1 u_1 - \xi_1 \xi_6 - \xi_2 u_4^2,\\
	\dot u_2 = u_1u_2 - \xi_3 u_1u_3 - u_2 + \xi_4,\\
	\dot u_3 = \xi_3 u_1u_2 + u_1u_3 - u_3,\\
	\dot u_4 = -\xi_5 u_4 + \xi_2 u_4u_1 + \mu,
	\end{cases}
\end{equation}
which has a four dimensional phase space and seven parameters.
In Figure \ref{f:lorHopf2Hopf} we depict a full continuous branch of periodic orbits of \eqref{lorenz84intro} with $\mu$ as the ``continuation'' parameter,  starting at one Hopf bifurcation point and finishing at another one. The two equilibria with Hopf bifurcation points, which are connected by this branch of periodic solutions, lie on different continuation curves, i.e., the periodic orbits ``cross over'' from one branch of equilibria to another. 
Further details are discussed in Section~\ref{s:lorenz84}.

Several other examples of fold and Hopf bifurcations are also presented in Section~\ref{s:examples}. This includes an illustration in Section~\ref{s:hamiltonianexample} of how Hamiltonian systems, which are very nongeneric from the point of view of periodic orbits, can nevertheless be analyzed using the methods in the current paper.
The accompanying \textsc{matlab} code can be found at~\cite{codehopf}.

% \corrc Not sure if we should give more details about eigenvalue information in the introduction. JB <<>>

%\corrc We (JB and JP) talked about adding a paragraph here explaining the significance of this paper. We really take a radically different point of view compared to traditional papers which focus on the bifurcation \emph{point}. <<>>

%\corrc Outline of the paper to be added. JB <<>>

% \corrc Not sure if we should write more about why all this is interesting. JB <<>>

% \blue{What is still missing are remarks concerning the non-degeneracy of the Hopf bifurcation.}  \blue{Also, perhaps eigenvalue information and remarks about sub/supercritical can be added.}
% \blue{Note also that in this methodology we do not check whether possible other pairs of eigenvalues cross the in addition to the one responsible for the creation of the periodic orbit that we are curve following.}{\color{green} This can be taken care of by solving the algebraic problem}

\section{Setup for the continuation of periodic orbits in Fourier space}
\label{s:setup}
%!TEX root = bifcont.tex

We briefly introduce the setup of the continuation problems under consideration in this paper. Additional details can be found in~\cite{QJBcontinuation}. Consider a polynomial vector field of the form 
\begin{equation} \label{eq:h_vector_field}
  \left\{ \begin{array}{l} \dot{u} = h(u,\lambda),\\
 g(\lambda)=0, \\
  u \text{ is $2\pi$-periodic},
  \end{array}\right.
\end{equation}
where $h:\R^n \times \R^m \to \R^n$ is a polynomial vector field and $g:\R^m \to \R^{m'}$ is a polynomial mapping. The maps $h$ and $g$ depend on the problem under study. For the standard \emph{pseudo-arclength} continuation of periodic orbits, $m=2$, $m'=0$, $\lambda=(\tau,\mu) \in \R^2$, $h=\tilde{f}:\R^n \times \R^{2} \to \R^{n}$ is defined in \eqref{e:prelim} and $g \equiv 0$. For the continuation of periodic orbits passing through a Hopf bifurcation, $m=3+n$, $m'=n$, $\lambda=(\tau, a, \y,\mu) \in \R^{3+n}$, $h=\bar{f}:\R^{n} \times \R^{3+n} \to \R^{n}$ is defined in \eqref{eq:bar_f_defn} and $g(\lambda)=f(\y,\mu)$.

\begin{remark}
For the pseudo-arclength continuation, $g \equiv 0$ and therefore two equations are ``missing'' to balance the variable $\lambda \in \R^2$. In this case a phase condition and a continuation equation are appended. For the desingularized Hopf problem, three equations are missing as $\lambda \in \R^{3+n}$ is variable and $g(\lambda)=0 \in \R^n$. A phase condition, a continuation equation and an amplitude equation are then introduced.
\end{remark}

\subsection{Formulation in Fourier space}
\label{s:fourier}

We write the Fourier expansion of a $2\pi$-periodic function $u =(u_1,\dots,u_n) : \R \to \R^n$ as
\begin{equation}\label{e:fourier}
u(t)=\sum_{k\in\mathbb{Z}}(v)_{k} e^{\imag k t }, \qquad (v)_{k}=(v_1,\dots,v_n)_k\in \mathbb{C}^n.
\end{equation}
The differential equation $\dot{u} = h(u,\lambda) \in \R^n$ then transforms in Fourier space to
\begin{equation}\label{e:defF}
\left(F_i(x)\right)_k \bydef \imag k  (v_i)_{k} -
  (\widehat{h}_i (v,\lambda))_k =0 \qquad \text{for } 1\leq i \leq n, k\in \Z,
\end{equation}
where $\widehat{h}_i$ is the polynomial $h_i$, but with multiplications interpreted as convolutions, denoted
\[
(\tilde{v} \tilde{v}')_k \bydef \sum_{k' \in \Z}  \tilde{v}_{k'} \tilde{v}'_{k-k'}.
\]

To fix a Banach space in which we will apply contraction arguments, 
we introduce the $\nu$-norm ($\nu \geq 1$) on $\C^\Z$ as
\begin{equation}\label{norm_l1nu}
\|\tilde{v}\|_\nu \bydef \sum_{k \in \Z}  |\tilde{v}_k| \nu^{|k|} ,
\end{equation}
with corresponding Banach space 
$\ell^1_\nu \bydef \{ \tilde{v} \in \C^\Z \,:\, \|\tilde{v}\| _\nu <\infty\}$.
The space of variables $x=(v,\lambda)$ is then $X=X_\nu \bydef (\ell^1_\nu)^n \times \C^m$.
On the components of $x=(x_1,\dots,x_{m+n})=(v_1,\dots,v_n,\lambda_1,\dots,\lambda_m)$
we define the norm
\[
  \|x_j\| \bydef \begin{cases}
  \|x_j\|_\nu \qquad &\text{for }1\leq j\leq n,\\
  |x_j| & \text{for }n+1\leq j \leq n+m,
  \end{cases}
\]
leading to the product norm
\begin{equation}\label{e:maxnorm}
	\|x\|_X \bydef \max_{1\leq j \leq m+n} \| x_j\|.
\end{equation}	
We introduce the time derivative on $\ell^1_\nu$ by
\[
  (\imag \K  \tilde{v})_k \bydef \imag k \tilde{v}_k, 
  \qquad \text{for } k \in \Z,
\]
which we extend to $X$ via $\imag \K x = (\imag \K v_1,\dots,\imag \K v_n,0,\dots,0)$.

Finite dimensional numerical approximations are found in the truncated space
\[
  X_K \bydef \{ x=(v,\lambda) \in X : (v_i)_k=0 \text{ for all } |k|>K, 1 \leq i \leq n \}
\]
for some $K \in \N$.
The space $X_K$ can be identified with $\C^{m+n(2K+1)}$ and it will be convenient notationally to introduce the following bilinear form on $X_K$:
\begin{equation}\label{e:bilinear}
  \langle x, x' \rangle \bydef \sum_{i=1}^{n} \sum_{k=-K}^K (v_i)_k (v'_i)_k  
  + \sum_{j=1}^m \lambda_j \lambda'_j .
\end{equation}

In order to recover a real-valued solution we will check a posteriori that  $(v_i)_{-k} = \overline{(v_i)_k}$ using equivariance of the problem under the conjugation symmetry defined below.
\begin{definition}
The \emph{conjugate} $x^* = (v^*,\lambda^*)$ of $x=(v,\lambda) \in X$ is given by 
\[
  \lambda^*_j = \overline{\lambda_j} ~~\text{for } 1\leq j \leq m, 
  \qquad 
  (v^*_i)_k = \overline{(v_i)_{-k}} ~~\text{for } 1\leq i \leq n, k \in \Z.
\]
The set of conjugate symmetric elements is denoted by $\XS = \{ x \in X : x^*=x\}$, and $\XS_K = \XS \cap X_K$.
\end{definition}
We note that both $F$ and $g$ are equivariant under the conjugation symmetry: $F(x^*)=F(x)^*$ and $g(\lambda^*)=g(\lambda)^*$. 
Note that the equivariance of  $g$ follows from the fact that it is a real polynomial mapping. 
Furthermore,
$ \langle x, x' \rangle \in \R$ for $x, x' \in \XS_K$.

To set up the rigorous continuation framework, let us assume we have two points $\hx_0 = (\hv_0,\hl_0) \in \XS_K$ and $\hx_1 = (\hv_1,\hl_1) \in \XS_K$ which each represent an approximate solution
of
\begin{equation}\label{e:Fgsystem}
  \left\{
  \begin{array}{l}
	  F(x)=0, \\
	  g(\lambda)=0 .
  \end{array}
  \right.
\end{equation}
%where $g(\lambda)$ represents the set of algebraic constraints.
We define the interpolation
\[
  \hx_s = (\hv_s, \hl_s)  \bydef (1-s) \hx_0 + s\hx_1
  \qquad\text{for  } s\in [0,1].
\]
To introduce the phase condition we define 
$q^\phase = (q^\phase_v,0) \in \XS_K$
with 
\begin{equation}\label{e:choiceofphaseq}
  (q^\phase_v)_j  =  \overline{ \imag \K (\hv_{\frac{1}{2}})_j} \qquad\text{for }
  |k| \leq K, 1\leq j \leq n.
\end{equation}
We choose the phase condition
\[
  G^\phase(x) \bydef   
  \bigl\langle q^\phase , x \bigr\rangle  
  % - \bigl[(1-s)
  %   \langle \hx_0 , \hq_0 \rangle + s \langle \hx_1 , \hq_1 \rangle \bigr]
  =0 .
\] 
This phase equation does not depend on $s$, which deviates slightly from the one in~\cite{QJBcontinuation}. Having an $s$-dependent phase condition is convenient when validating  long stretches of a solution branch (essentially, it assists in ``gluing'' short pieces into a long smooth curve). Here we are interesting in verifying a solution branch near a bifurcation point, hence $s$-dependence is not necessary and merely complicates the algebra and notation.
% $\hq_\s = (q_\s,0) \in \XS_K$ for $\s=0,1$
% with
% \begin{equation}\label{e:choiceofphaseq}
%   ((q_\s)_i)_k = \imag k ((\hv_\s)_i)_{-k} \qquad\text{for }
%   |k| \leq K, 1\leq i \leq n.
% \end{equation}
% We interpolate these as $\hq_s = (1-s) \hq_0 + s \hq_1$.
% We choose the phase condition
% \[
%   G^\phase_s(x) \bydef
%   \bigl\langle x , \hq_s \bigr\rangle
%   % - \bigl[(1-s)
%   %   \langle \hx_0 , \hq_0 \rangle + s \langle \hx_1 , \hq_1 \rangle \bigr]
%   =0 .
% \]
%\corrc I wrote the phase condition in a simpler way, since $\langle \hx_0 , \hq_0 \rangle$ in fact vanishes, I think. As a consequence I slightly changed the wording at the start of the remark below. JB <<>> 
\begin{remark}\label{r:dependence}
We note that $G^\phase$ depends linearly on $x$.
%as well as affine linearly on $s$. 
Furthermore, $G^\phase$ only depends on the Fourier coefficients with indices $|k| \leq K$. We will encounter slightly more general dependence for more general ``phase'' equations throughout, namely we allow for affine linear dependence on  $x$ and affine linear dependence on $s$. For this purpose we introduce the notation
\begin{equation}\label{e:generalG}
  \G^{\pp_0,\pp_1}_{\yy_0,\yy_1}(x,s) \bydef 
  \bigl\langle (1-s) \pp_0 + s \pp_1 , x \bigr\rangle  - \bigl[(1-s)
    \yy_0  + s \yy_1  \bigr],
\end{equation}
for $\pp_0,\pp_1\in \mathcal{S}_K$ and $\yy_0,\yy_1 \in \R$. 
In this notation $G^\phase(x)=\G^{q^\phase,q^\phase}_{0,0}(x,s)$.
%{\langle \hx_0 , \hq_0 \rangle,\langle\hx_1 , \hq_1 \rangle }(x,s)$.
We note that \[ 
\G^{\pp_0,\pp_1}_{\yy_0,\yy_1}(x^*,s)=  \overline{\G^{\pp_0,\pp_1}_{\yy_0,\yy_1}(x,s)}.\]
\end{remark} 

\begin{remark}
%\corrc The amplitude equation has also been changed. JB <<>>
In case $h=\bar{f}:\R^{3+n} \times \R^{n} \to \R^{n}$ is the desingularized Hopf problem defined in \eqref{eq:bar_f_defn}, an extra phase-like condition is appended to the system, namely the ($s$-independent) amplitude equation
\[
G^\ameq(x) \bydef   \bigl\langle q^\ameq , x \bigr\rangle  - 1 =   \G^{q^\ameq,q^\ameq}_{1,1}(x,s)  =0,
\]
where 
$q^\ameq = (q^\ameq_v,0) \in \XS_K$,
with 
%\corrc Note the new $\K^2$. Elena: has it been incorporated in the code? JB <<>>
%\corrc yes. Elena <<>>
\[
  (q^\ameq_v)_i =  \overline{\K^2 (\hv_{\frac{1}{2}})_i} \qquad\text{for }
  |k| \leq K, 1\leq i \leq n.
\]
See Section~\ref{s:hopf} for more details and a motivation for this choice. 
\end{remark}
% \begin{remark}\red{REDO}
% In case $h=\bar{f}:\R^{3+n} \times \R^{n} \to \R^{n}$ is the desingularized Hopf problem defined in \eqref{eq:bar_f_defn}, an extra phase-like condition is appended to the system, namely the amplitude equation
% \[
% G^\ameq_s(x)=\G^{\hp_0,\hp_1}_{1,1}(x,s),
% \]
% %
% where
% $\hp_\s = (0, p_\s) \in \XS_K$ for $\s=0,1$,
% with $p_\s$ given by
% \[
%   ((p_\s)_i)_k = ((\hv_\s)_i)_{-k} \qquad\text{for }
%   |k| \leq K, 1\leq i \leq n.
% \]
% See Section~\ref{s:hopf} for more details.
% \end{remark}

%\corrc Fixed the use of predictors, which was missing a complex conjugate. JB <<>>
In an analogous manner we introduce an $s$-dependent continuation equation $G^\cont_s(x)=0$. Indeed, we determine numerically ``predictors'' $\dot{\hx}_\s \in \XS_K$ of the tangent direction of the solution curve (for the problem including the phase condition) at $\hx_\s$ for $\s=0,1$. 
We set 
\[
  q^\cont_{\s} = \overline{\dot{\hx}_{\s}} ,
\]
%$(\dot{\hq}_\s)_k = \overline{(\dot{\hx}_\s)_k}$, 
so that $q^\cont_{\s} \in \XS_K$, and we define 
\begin{align}
G^\cont_s(x) &\bydef \bigl\langle (1-s) q^\cont_0 + s q^\cont_1 , x \bigr\rangle - \bigl[ (1-s) \langle  q^\cont_0 , \hx_0 \rangle + s \langle q^\cont_1 , \hx_1 \rangle \bigr] \nonumber \\
&= \G^{q^\cont_0,q^\cont_1}_{\langle  q^\cont_0 , \hx_0  \rangle ,\langle  q^\cont_1 ,  \hx_1 \rangle }(x,s) \label{e:continuationequation},
\end{align}
hence the dependence of $G^\cont$ on $x$ and $s$ is  as described in Remark~\ref{r:dependence}. 

The full set of ``algebraic'' equations is
\begin{equation*}%\label{e:defG_pal_cont}
G_s(x) =
\begin{bmatrix}  G^\phase(x)  \\ G^\cont_s(x) \end{bmatrix} 
% \end{equation}
% %
% the pseudo-arclength continuation,
% %
% \begin{equation}\label{e:defG_hopf}
\qquad\text{or}\qquad
G_s(x) =
\begin{bmatrix}  G^\phase(x) \\ G^\ameq(x) \\ g(\lambda) \\G^\cont_s(x)  \end{bmatrix} ,
\end{equation*}
for the pseudo-arclength continuation and the desingularized Hopf problem, respectively. The general zero finding problem for continuation is 
\begin{equation}\label{e:defHs}
H_s(x) \bydef\begin{bmatrix} F(x) \\ G_s(x) \end{bmatrix} = 0 .
\end{equation}
Clearly, conjugate symmetric zeros of $H_s$ correspond to periodic orbits of~\eqref{e:initial}, provided $\tau=\lambda_1 \neq 0$.

We define the associated fixed point operator
\begin{equation}\label{e:defTs}
T_s(x) \bydef x-A_s H_s(x), \qquad T_s:X \to X, \qquad s\in [0,1].
\end{equation}
Here $A_s$ is an injective map that approximates the inverse of the Jacobian $D_x H_s(\hx_s)$. We do not elaborate on the choice for $A_s$, which is discussed in detail in~\cite[Section~8.2]{QJBcontinuation}. 
For the current discussion it suffices to say that $A_s=(1-s)A_0 + s A_1$,
and $A_{\s}$,  $\s=0,1$ are approximate inverses of the Jacobians at the end points $\hx_\s$. Each linear operator $A_{\s}$, $\s=0,1$ is
made up from a $(m+n(2K+1))\times(m+n(2K+1))$ matrix and a diagonal infinite tail.
In particular, let $\Pi_K$ denote the natural projection of $X$ onto $X_K$, then
the block structure (finite matrix and infinite tail) of $A_{\s}$ is characterized by
%\corrc Shouldn't we replace $A$ by $A_s$ in what follows? JP  Yes, done. JB<<>>
$A_{\s} \Pi_K = \Pi_K A_{\s}$ and $ A_{\s} (I -\Pi_K) =  (I -\Pi_K) A_{\s}$,
while the diagonal tail is given by
\[
(I -\Pi_K) A_{\s} x = (I-\Pi_K)  (-\imag \K^{-1} v_1, \dots ,-\imag \K^{-1} v_n,
0,\dots , 0),
\] 
with $(\K^{-1} \tilde{v})_k \bydef k^{-1} \tilde{v}_k$ for any $k \neq 0$.
With regards to conjugation symmetry, the choice of $A_{\s}$ is such that
$A_{\s} x^* = (A_{\s} x)^*$, hence $T_s(x^*)=T_s(x)^*$ for all $s\in[0,1]$. 

Let $B_r(x) \bydef \{ x' \in X : \|x-x'\|_X \leq r \}$, then the ``tube'' around the numerical line segment $\{\hx_s: s\in [0,1]\}$
is given by
\begin{equation}\label{e:tube}
  \mathcal{C}_r \bydef \bigcup_{s \in [0,1]} B_r(\hx_s).
\end{equation}
In~\cite[Sections~6 and~8]{QJBcontinuation} explicitly computable bounds 
$Y=(Y_1,\dots, Y_{m+n})$ and $Z(r)=(Z_1,\dots,Z_{m+n})(r)$ are derived, that satisfy
\begin{subequations}
\label{e:boundsYZ}
\begin{alignat}{1}
Y_j &\geq \max_{s\in [0,1]}\| (T_s(\hx_s) - \hx_s )_j \| ,\\
Z_j(r) &\geq \max_{s\in [0,1]}\sup_{b, c\in B_1(0)}\|[  D_x T_s(\hx_s+r b)r c]_j \|,
\label{e:boundZ}
\end{alignat}
\end{subequations}
for $j=1,\dots,m+n$.
The following theorem, which is itself based on the uniform contraction principle (see for example~\cite{BLM,breden-vanicat,GLPmulti} for similar results), is the crux of rigorously verified continuation.
\begin{theorem}[Theorems~3.1~and~4.2 in~\cite{QJBcontinuation}]\label{thm:radii_bound}
Assume $Y$ and $Z(r)$ satisfy~\eqref{e:boundsYZ}.
Assume moreover that $A_s$ is injective for all $s\in[0,1]$. 
If there exists an $\hr>0$ such that 
\begin{equation}\label{e:radpoly}
  Y_j+Z_j(\hr)-\hr < 0 
\qquad \text{for all } j=1,\dots, m+n,
\end{equation}
then $T_s$ is a contraction on $B_{\hr}(\hx_s)$  for every $s$ in $[0,1]$. 
The fixed points $\sol{x}(s)$ of $T_s$ in $B_{\widehat{r}}(x_s)$
are conjugate symmetric and form 
a continuous parametrized curve $\sol{x}:[0,1]\rightarrow X$ in $\mathcal{C}_{\hr}$, such that $H_s(\sol{x}(s))=0$ for every $s\in [0,1]$.
\end{theorem}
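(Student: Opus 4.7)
The plan is to apply a parametrized Banach fixed point theorem to $T_s$ on the closed ball $B_{\hat r}(\hat x_s)$ componentwise, and then convert fixed points of $T_s$ into zeros of $H_s$ via the injectivity of $A_s$.

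First I would verify the self-mapping property. For $x\in B_{\hat r}(\hat x_s)$, the fundamental theorem of calculus gives
\[
[T_s(x) - \hat x_s]_j = [T_s(\hat x_s) - \hat x_s]_j + \int_0^1 [D_x T_s(\hat x_s + t(x-\hat x_s))(x-\hat x_s)]_j\,dt.
\]
The first term has $j$-norm at most $Y_j$ by \eqref{e:boundsYZ}. For the integrand, convexity of $B_{\hat r}(\hat x_s)$ allows writing each interpolated point as $\hat x_s + \hat r b_t$ with $\|b_t\|_X \leq 1$, and rescaling gives $x-\hat x_s = (\|x-\hat x_s\|_X/\hat r)\,\hat r c$ with $\|c\|_X \leq 1$; then \eqref{e:boundZ} bounds the integrand componentwise by $(\|x-\hat x_s\|_X/\hat r)\,Z_j(\hat r) \leq Z_j(\hat r)$. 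The radii polynomial inequality \eqref{e:radpoly} then yields $\|T_s(x)-\hat x_s\|_X < \hat r$.

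Second, the same integral representation applied to any pair $x, x' \in B_{\hat r}(\hat x_s)$ produces the Lipschitz estimate $\|T_s(x')-T_s(x)\|_X \leq \kappa\,\|x'-x\|_X$ with $\kappa = \max_j Z_j(\hat r)/\hat r < 1$ (contractivity follows because $Y_j\geq 0$ forces $Z_j(\hat r)<\hat r$), uniformly in $s\in[0,1]$. Banach's fixed point theorem then produces a unique $\sol{x}(s)\in B_{\hat r}(\hat x_s)$ with $T_s(\sol x(s)) = \sol x(s)$. Continuity of $s \mapsto \sol{x}(s)$ follows from the uniform contraction principle, since $s \mapsto T_s(x)$ is affine (through $A_s$ and through $\hat x_s$) and the contraction constant $\kappa$ is independent of $s$.

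To conclude, I would verify the two remaining claims. Conjugate symmetry: since $\hat x_0,\hat x_1 \in \XS_K$ we have $\hat x_s \in \XS$, and because $T_s$ commutes with the $^*$-involution, $\sol{x}(s)^*$ is also a fixed point of $T_s$ in $B_{\hat r}(\hat x_s)$; uniqueness forces $\sol{x}(s) = \sol{x}(s)^*$, i.e.\ $\sol{x}(s) \in \XS$. Translating fixed points into zeros: $T_s(\sol{x}(s)) = \sol{x}(s)$ reads $A_s H_s(\sol{x}(s)) = 0$, and injectivity of $A_s$ gives $H_s(\sol{x}(s)) = 0$. The delicate technical point is the self-mapping step, where one must parametrize intermediate points of the mean-value integral so that the bound \eqref{e:boundZ}, stated only at the endpoint radius, applies uniformly along the integration path; convexity of $B_{\hat r}(\hat x_s)$ together with monotonicity of $Z_j$ in $r$ (as a polynomial with nonnegative coefficients) handles this cleanly. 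Everything else is standard Banach fixed point bookkeeping.
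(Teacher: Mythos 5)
Your proposal is correct and follows essentially the same route the paper attributes to the cited reference: the radii-polynomial inequalities give self-mapping and a uniform contraction constant via the mean-value integral and the $Y$, $Z$ bounds, Banach's fixed point theorem plus the uniform contraction principle give existence, uniqueness and continuity in $s$, conjugate symmetry follows from equivariance of $T_s$ and uniqueness in the conjugation-invariant ball, and injectivity of $A_s$ converts fixed points into zeros of $H_s$. Only cosmetic quibbles: $s\mapsto T_s(x)$ is quadratic rather than affine in $s$ (only continuity is needed), the uniform contraction principle is cleanest after recentering to the fixed ball $B_{\hr}(0)$ since the balls $B_{\hr}(\hx_s)$ move with $s$, and the monotonicity of $Z_j$ you invoke at the end is unnecessary because the bound \eqref{e:boundZ} already ranges over the full unit ball $B_1(0)$.
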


\begin{remark}\label{r:injectivity}
Injectivity of $A_s$ follows by a computational check on the finite part, since invertibility of its diagonal tail is trivial to establish. For the implementation of the $Z$-bound chosen in \cite{QJBcontinuation}, the former computational check is in fact implied by $Z(\hr)<\hr$, see~\cite[Section~8.5]{QJBcontinuation}.
Since $\sol{x}(s) \in B_r(\hx_s)$, it follows from the inequalities~\eqref{e:radpoly} and the definition of the $Z$-bound~\eqref{e:boundZ} that
\[
  \| I - A_s D_x H_s(\sol{x}(s)) \|_{B(X)} <1,
\]
where $\| \cdot \|_{B(X)}$ is the bounded linear operator norm on $X$. Hence $D_x H_s(\sol{x}(s))$ is injective.
%  injectivity of $A_s$ implies that
% $D_x H_s(\sol{x}(s))$ is injective.
\end{remark}
%\corrc I think "$D_x H_s(\sol{x}(s))$ is injective" is correct, since injectivity of $A_s$ is already mentioned in the first sentence of the Remark. JB. I agree. JP <<>>

%\begin{itemize}
%\item reformulation of \eqref{basic} into $H(\lambda,u)$ - First draft done
%\item Fourier and the $\Xi$ space - First draft done
%\item continuation with continuation equation - First draft done
%\item reference to [other paper] for validation - First draft done
%\item 2 points system - First draft done
%\end{itemize}

\section{Saddle-node bifurcation}
\label{s:saddlenode}
%!TEX root = bifcont.tex

It is computationally straightforward to check that the parametrized solution curve $\{\sol{x}(s)\}_{s\in [0,1]}$ is smooth, for example based on~\cite[Lemma~3.3]{QJBcontinuation}, see also~\cite{breden-vanicat,BLM}. Indeed, we need to check (computationally, using interval arithmetic) that
\begin{equation}\label{e:smoothness}
  	\hat{r}  \, \langle  q^\cont_1 - q^\cont_0 , b \rangle 
  	\neq
    \langle  (1-s) q^\cont_1 + s q^\cont_0 , \hx_1 - \hx_0 
    \rangle  
	\quad \text{ for any } s \in [0,1] \text{ and } b\in B_1(0),
\end{equation}
which in practice is satisfied since $\dot{\hx}_0=\overline{q^\cont_0}$ and $\dot{\hx}_1=\overline{q^\cont_1}$ are both almost parallel to $\hx_1 - \hx_0$, and $\hat{r} \ll 1$.

From now on we assume that Inequality~\eqref{e:smoothness} is satisfied. 
%\corrc The following statement is not clear to me. Could we refer to some equations to make it clearer? JP <<>>
%\corrc I added an explanation. Enough  detail? JB <<>>
Since it implies that 
\begin{equation}\label{e:DfGsx}
	D_s G^\cont_s(x) \neq 0
	\qquad\text{for any} x \in \mathcal{C}_{\hr}, 
\end{equation}
we in particular obtain that $\sol{x}'(s) \neq 0$. 
Namely, let $\{x(s)=(1-s)\hx_0+s\hx_1+ \hat{r} b(s)\}_{s\in[0,1]}$ with $b(s) \in B_1(0)$ be any smooth parametrized curve solving $G^\cont_s(x(s))=0$. Then, by formally differentiating the latter identity, we arrive, after some rearrangement of terms, at
\[
    \langle  (1-s) q^\cont_1 + s q^\cont_0 , x'(s)
    \rangle =
  	 \hat{r}  \, \langle  q^\cont_0 - q^\cont_1 , b \rangle 
  	+
    \langle  (1-s) q^\cont_1 + s q^\cont_0 , \hx_1 - \hx_0 
    \rangle  .
\]
Therefore, Inequality~\eqref{e:smoothness} implies the derivative $\sol{x}'(s)$,
which exists since the curve is obtained through the uniform contraction principle (see e.g.~\cite{breden-vanicat,BLM}), cannot vanish, and the curve is smooth.
Since $G^\cont_s(\sol{x}(s))=0$ for all $s\in [0,1]$,
and the inequality~\eqref{e:DfGsx} implies that for any fixed $x\in \mathcal{C}_{\hr}$ the  function $[0,1] \ni s \mapsto G^\cont_s(x)$ vanishes at most once,  all points in  $\{x(s)\}_{s\in[0,1]}$ are distinct, hence the curve does not selfintersect (i.e.\ it is a smooth embedding).
 
%\corrc Am I happy not to include a uniqueness region? No. But restricting to linear dependence on $s$ in the continuation equation makes a useful formulation difficult. I think we either (re)introduce quadratic dependence or we just skip uniqueness in this paper. JB <<>>
 
%\corrc New remark to cut off the caps from the tubes, so that we can be precise about uniqueness. JB <<>>
% \begin{remark}\label{r:cylinder}
% Theorem~\ref{thm:radii_bound} combined with inequality~\eqref{e:smoothness}
% supplies a curve that parametrizes all solutions of~\eqref{e:Fgsystem} modulo
% phase shift (and modulo rescaling of the amplitude $a$ in the desingularized
% Hopf case, see Section~\ref{s:hopf}) in the cylinder
% $\widetilde{\mathcal{C}}_{\hr}$ resulting from removing the two caps from the
% tube $\mathcal{C}_{\hr}$ defined in~\eqref{e:tube}:
% \[
%   \widetilde{\mathcal{C}}_{\hr} \bydef (\mathcal{C}_{\hr} \cap \XS) \setminus (B^-_{\hr} \cup B^+_{\hr}),
% \]
% where the half-balls $B^{\pm}_{\hr}$, forming the caps of the tube, are given by
% \begin{alignat*}{1}
%   B^-_{\hr} &\bydef \{x \in B_{\hr}(\hx_0)  \cap \XS :
%   \langle  q^\cont_0  , x- \hx_0  \rangle <0 \}, \\
%   B^+_{\hr} &\bydef \{x \in B_{\hr}(\hx_1)  \cap \XS :
%   \langle q^\cont_1 , x- \hx_1   \rangle >0 \}.
% \end{alignat*}
% %\corrc Why this definition? unclear. Elena  Still unclear? JB Now clearer to me. Elena<<>>
% Here we have assumed that the orientation of the predictors $\dot{\hx}_{\s}$
% is chosen such that $\langle  q^\cont_\s , \hx_1 - \hx_0 \rangle >0$ for $\s=0,1$.
% \end{remark}

\begin{definition} \label{defn:saddle-node}
{\em
The solution curve has a \emph{nondegenerate fold bifurcation} with respect to some parameter $\lambda_j$ if there is an $\sstar \in (0,1)$ such that
\begin{equation}\label{e:nondegsadnod}
  \sol{\lambda}'_j(\sstar) = 0
  \qquad 
  \sol{\lambda}''_j(\sstar) \neq 0.  
\end{equation}
}
\end{definition}
%
%\corrc The above is now our definition of a nondegenerate fold bifurcation. Below I mention that we have no eigenvalue requirement. JB <<>>
In this section we explain how to establish such nondegenerate folds.
% The result will simultaneously provide a uniqueness region, see Remark~\ref{r:unique}.
Note that we do not impose any eigenvalue restrictions in the description~\eqref{e:nondegsadnod} of a nondegenerate fold. Considerations about eigenvalues and exchange of stability are discussed in Section~\ref{s:eigenvalues}.
%\corrc And another new remark. JB I like it. JP<<>>
\begin{remark}
We allow for any of the elements of the vector $\lambda$ to be interpreted as the bifurcation parameter. Of course, the obvious choice is to take the original parameter $\mu$ in~\eqref{e:initial} as the bifurcation parameter. In some cases (e.g.~Hamiltonian systems, boundary value problems) it may also be of interest to consider the normalized period $\tau$ as the bifurcation parameter, see Remark~\ref{r:hopftau} and the example in Section~\ref{s:hamiltonianexample}.
\end{remark}

To obtain equations for the derivatives $\sol{x}'(s)$ and $\sol{x}''(s)$, recall \eqref{e:defHs} and differentiate $H_s(\sol{x}(s))$ 
formally to obtain
\begin{subequations}
\label{e:three}
\begin{alignat}{1}
  H_s(\bs{x}^{[0]})	&=0, \label{e:three-a}\\
  D_x H_s(\bs{x}^{[0]}) \bs{x}^{[1]} + D_s H_s(\bs{x}^{[0]}) & =0 ,\label{e:three-b}\\
  D_x H_s(\bs{x}^{[0]}) \bs{x}^{[2]} +  2 D_s D_x H_s(\bs{x}^{[0]}) \bs{x}^{[1]} + D_x^2 H_s(\bs{x}^{[0]}) (\bs{x}^{[1]},\bs{x}^{[1]}) &=0 ,\label{e:three-c}
\end{alignat}
\end{subequations}
where we have used that $D_s^2 H_s(x)$ vanishes.
The system~\eqref{e:three} is solved by 
\[
(\bs{x}^{[0]},\bs{x}^{[1]},\bs{x}^{[2]})(s) = (\sol{x}(s),\sol{x}'(s),\sol{x}''(s)).
\] 

\begin{remark}\label{r:uniquederivatives}
When $\bs{x}^{[0]}=\sol{x}(s)$ solves~\eqref{e:three-a},
then the \emph{unique} solutions of~\eqref{e:three-b} and~\eqref{e:three-c}
are $(\bs{x}^{[1]},\bs{x}^{[2]}) = (\sol{x}'(s),\sol{x}''(s))$, 
provided $D_x H_s(\bs{x}^{[0]}(s))$
is an injective linear operator. Remark~\ref{r:injectivity} explains that this
injectivity holds whenever we have found our solutions through
Theorem~\ref{thm:radii_bound}, see also Remark~\ref{r:triangular}.
\end{remark}

We now show that the extended system is again of the general form~\eqref{e:defHs}, that is, finitely many algebraic equations and generalized phase equations with structure as described in Remark~\ref{r:dependence},
and a polynomial vector field in Fourier space variables.
Hence we can apply the construction of Theorem~\ref{thm:radii_bound} to find solutions of~\eqref{e:three}. 

We introduce $\bs{x}=(\bs{v},\bs{\lambda}) \in (\ell^1_\nu)^{3n} \times \C^{3m} = \bs{X} \equiv X^3$, which we also represent as
$\bs{x}=(\bs{x}^{[0]},\bs{x}^{[1]},\bs{x}^{[2]})$, with $\bs{x}^{[i]}=(\bs{v}^{[i]},\bs{\lambda}^{[i]}) \in X$ for $i=0,1,2$.
The extended vector field $\bs{h}(\bs{u},\bs{\lambda}) \in \R^{3n}$ of $h$ given in \eqref{eq:h_vector_field}
with $\bs{\lambda}=(\bs{\lambda}^{[0]},\bs{\lambda}^{[1]},\bs{\lambda}^{[2]}) \in \R^{3m}$ and $\bs{u}=(\bs{u}^{[0]},\bs{u}^{[1]},\bs{u}^{[2]}) \in \R^{3n}$ is defined by 
\begin{equation}\label{e:defboldh}
\bs{h} (\bs{u},\bs{\lambda}) \bydef \begin{bmatrix}
h(\bs{u}^{[0]},\bs{\lambda}^{[0]}) \\
D_\lambda h (\bs{u}^{[0]},\bs{\lambda}^{[0]}) \bs{\lambda}^{[1]}
+
D_u h (\bs{u}^{[0]},\bs{\lambda}^{[0]}) \bs{u}^{[1]}\\
\bs{h}^{[2]} (\bs{u},\bs{\lambda})
\end{bmatrix},
\end{equation}
where
\begin{alignat*}{1}
\bs{h}^{[2]} (\bs{u},\bs{\lambda}) & \bydef
D^2_\lambda h (\bs{u}^{[0]},\bs{\lambda}^{[0]}) [\bs{\lambda}^{[1]},\bs{\lambda}^{[1]}] + 
D_\lambda h (\bs{u}^{[0]},\bs{\lambda}^{[0]}) \bs{\lambda}^{[2]} \\
& \quad\qquad+
D^2_u h (\bs{u}^{[0]},\bs{\lambda}^{[0]}) [\bs{u}^{[1]},\bs{u}^{[1]}]
+
D_u h (\bs{u}^{[0]},\bs{\lambda}^{[0]}) \bs{u}^{[2]} \\
& \quad\qquad\qquad
+
2D_\lambda D_u  h (\bs{u}^{[0]},\bs{\lambda}^{[0]}) [\bs{\lambda}^{[1]},\bs{u}^{[1]}] .
\end{alignat*}
Similarly, the algebraic equations $g(\lambda)=0$ given in \eqref{eq:h_vector_field} are extended to
\[
  \bs{g}(\bs{\lambda}) = 
  \begin{bmatrix}
  g({\bs{\lambda}}^{[0]})	\\
  D_\lambda g(\bs{\lambda}^{[0]}) \bs{\lambda}^{[1]} \\
  D_\lambda g(\bs{\lambda}^{[0]}) \bs{\lambda}^{[2]} +
  D^2_\lambda g(\bs{\lambda}^{[0]}) [\bs{\lambda}^{[1]},\bs{\lambda}^{[1]}] \\  
  \end{bmatrix} .
\]

For any phase condition of the form 
$\G^{\pp_0,\pp_1}_{\yy_0,\yy_1}(x,s)$, see~\eqref{e:generalG},
the three extended equations are
\begin{alignat*}{1}
%   \G^{\bs{\pp_0},\bs{\pp_1}}_{\bs{\yy_0},\bs{\yy_1}}(\bs{x},s) 
%\begin{bmatrix}
  \G^{(\pp_0,0,0),(\pp_1,0,0)}_{\yy_0,\yy_1}
  				(\bs{x},s)	&=0,	\\[1mm]
  \G^{(\pp_1-\pp_0,\pp_0,0),(\pp_1-\pp_0,\pp_1,0)}_{\yy_1-\yy_0,\yy_1-\yy_0}
  				(\bs{x},s) &= 0,\\[1mm]
  \G^{(0,2(\pp_1-\pp_0),\pp_0),(0,2(\pp_1-\pp_0),\pp_1)}_{0,0}
  				(\bs{x},s)&= 0.
%  \end{bmatrix} .
\end{alignat*}
We note that each of these is of the form
$\G^{\bs{\pp}_0,\bs{\pp}_1}_{\tilde{\yy}_0,\tilde{\yy}_1}(\bs{x},s)$ with $\bs{\pp}_\s \in
\bs{X}$ for $\s\in\{0,1\}$ such that $\bs{\pp}_\s^*=\bs{\pp}_\s$ and $\tilde{\yy}_{\s}
\in \R$, as described in Remark~\ref{r:dependence}. Hence each of
these is conjugate equivariant.
% \[
%  \G^{\bs{\pp_0},\bs{\pp_1}}_{\yy'_0,\yy'_1}}(\bs{x}^*,s) =
%  \G^{\bs{\pp_0},\bs{\pp_1}}_{\yy'_0,\yy'_1}}(\bs{x},s)^*.
% \]

We collect all algebraic and extended phase condition equations in $\bs{G}_s(\bs{x})$.
Since taking the Fourier transform and taking the derivative with respect to~$s$ commute, the system
\[
  \bs{H}_{\!s}(\bs{x}) \bydef\begin{bmatrix} \bs{G}_s(\bs{x})\\ \bs{F}(\bs{x})\end{bmatrix} = 0 
\]
is equivalent to~\eqref{e:three}. Hence we may indeed apply the continuation technique (and code) from~\cite{QJBcontinuation} as outlined in Section~\ref{s:setup}. Denote by $\bigl( \bs{\sol{x}}^{[0]}(s),\bs{\sol{x}}^{[1]}(s) ,\bs{\sol{x}}^{[2]}(s)  \bigr)$ the resulting solution curve for $s \in[0,1]$.

%\corrc New remark. JB <<>>

\begin{remark}\label{r:triangularA}
When applying the fixed point construction of Section~\ref{s:setup}, we need to choose an approximate inverse $\bs{A}_s$ of the Jacobian. 
Since $D_{\bs{x}} \bs{H}_{\!s}(\bs{x})$ is block lower triangular with respect to the splitting $\bs{x}=(\bs{x}^{[0]},\bs{x}^{[1]},\bs{x}^{[2]})$, we will always select $\bs{A}_s$ to be block lower triangular as well. This implies that if $\bs{T}_s$ is a contraction on $\bs{X} = X^3$, then the restriction $\bs{T}^{[0]}_s(\bs{x}^{[0]})$ of $\bs{T}_s(\bs{x})$ to the first of the three components is well-defined and a contraction on $X$. Analogously, the map $(\bs{T}^{[0]}_s(\bs{x}^{[0]}),\bs{T}^{[1]}_s(\bs{x}^{[0]},\bs{x}^{[1]}))$ is a contraction on $X^2$.
\end{remark}

\begin{remark}\label{r:triangular}
Since $D_{\bs{x}} \bs{H}_{\!s}(\bs{x})$ and $\bs{A}_s$ are block lower triangular with respect to the splitting $\bs{x}=(\bs{x}^{[0]},\bs{x}^{[1]},\bs{x}^{[2]})$,
when we apply the construction of Theorem~\ref{thm:radii_bound}, injectivity of the Jacobian (see Remark~\ref{r:injectivity}) implies injectivity of 
$D_x H_s(\bs{\sol{x}}^{[0]}(s))$.
\end{remark}

\begin{lemma}\label{l:extendedradpoly}
Let the conditions of Theorem~\ref{thm:radii_bound} be satisfied for the
extended system $\bs{H}_{\!s}$  and assume that the corresponding smoothness condition~\eqref{e:smoothness} holds.
Then $\sol{x}(s) = \sol{\bs{x}}^{[0]}(s)$ solves
$H_s(x)=0$, while $\sol{\bs{x}}^{[1]}(s) = \sol{x}'(s)$ and
$\sol{\bs{x}}^{[2]}(s) = \sol{x}''(s)$.
\end{lemma}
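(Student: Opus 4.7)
The plan is to exploit the block-triangular structure of the extended system together with the uniqueness statements in Remarks~\ref{r:uniquederivatives}, \ref{r:triangularA} and~\ref{r:triangular}. First I would observe that, by the very construction of $\bs{H}_{\!s}$ outlined in~\eqref{e:defboldh} and the extensions of $g$ and of the phase/continuation equations, the equations comprising the first of the three blocks of $\bs{H}_{\!s}(\bs{x})=0$ read precisely $H_s(\bs{x}^{[0]})=0$. Consequently $\bs{\sol{x}}^{[0]}(s)$ solves the original zero-finding problem~\eqref{e:defHs}. By Remark~\ref{r:triangularA} the block-lower-triangular choice of $\bs{A}_s$ makes the restriction $\bs{T}_s^{[0]}$ to the first block equal to the original $T_s$ acting on $X$, and the numerical approximation $\hat{\bs{x}}_s$ has first block $\hx_s$ by construction. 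Hence $\bs{\sol{x}}^{[0]}(s)$ is the unique fixed point of $T_s$ in $B_{\hr}(\hx_s)$, which by the notational convention of Theorem~\ref{thm:radii_bound} is exactly $\sol{x}(s)$.

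Next I would address the derivatives. The smoothness condition~\eqref{e:smoothness} applied to the extended system (together with the argument in Section~\ref{s:saddlenode} recalling that $H_s$ depends polynomially on $x$ and affinely on $s$, and $A_s$ affinely on $s$) implies that the curve $\bs{\sol{x}}(s)$, and therefore $\sol{x}(s)=\bs{\sol{x}}^{[0]}(s)$, is smooth as a map $[0,1]\to X$. Formally differentiating $H_s(\sol{x}(s))=0$ once with respect to $s$ yields
\[
   D_x H_s(\sol{x}(s))\,\sol{x}'(s) + D_s H_s(\sol{x}(s)) = 0,
\]
that is, $(\bs{x}^{[0]},\bs{x}^{[1]})=(\sol{x}(s),\sol{x}'(s))$ solves the pair~\eqref{e:three-a}--\eqref{e:three-b}. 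On the other hand, the second block of $\bs{H}_{\!s}(\bs{\sol{x}}(s))=0$ asserts that $(\bs{\sol{x}}^{[0]}(s),\bs{\sol{x}}^{[1]}(s))$ solves the same pair. By Remark~\ref{r:triangular} (which transfers the injectivity of Remark~\ref{r:injectivity} from the extended Jacobian down to $D_x H_s(\bs{\sol{x}}^{[0]}(s))$), the linear equation~\eqref{e:three-b} has a unique solution $\bs{x}^{[1]}$ once $\bs{x}^{[0]}$ is fixed, as noted in Remark~\ref{r:uniquederivatives}. Combined with $\bs{\sol{x}}^{[0]}(s)=\sol{x}(s)$, this forces $\bs{\sol{x}}^{[1]}(s)=\sol{x}'(s)$.

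For the second derivative I would run exactly the same argument one level up: differentiating $H_s(\sol{x}(s))=0$ twice in $s$ produces~\eqref{e:three-c} evaluated at $(\sol{x}(s),\sol{x}'(s),\sol{x}''(s))$, while the third block of $\bs{H}_{\!s}(\bs{\sol{x}}(s))=0$ states that $(\bs{\sol{x}}^{[0]},\bs{\sol{x}}^{[1]},\bs{\sol{x}}^{[2]})(s)$ satisfies the same equation. With the first two components already identified and $D_x H_s(\sol{x}(s))$ injective, uniqueness from Remark~\ref{r:uniquederivatives} closes the argument and gives $\bs{\sol{x}}^{[2]}(s)=\sol{x}''(s)$. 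The main obstacle I anticipate is not any single algebraic step but rather carefully justifying the $C^2$ regularity of $\sol{x}$ (needed to write down $\sol{x}''$ to begin with); the clean way is to invoke the smooth dependence version of the uniform contraction principle using that all data are polynomial in $(x,s)$, so that $\sol{x}\in C^\infty$ automatically, after which the two displayed differentiated identities are legitimate.
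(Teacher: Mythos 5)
Your argument is correct and follows essentially the same route as the paper, whose proof simply cites the equivalence of $\bs{H}_{\!s}(\bs{x})=0$ with the differentiated system~\eqref{e:three} together with the uniqueness statements of Remarks~\ref{r:uniquederivatives} and~\ref{r:triangular}; you have merely spelled out those ingredients (first block equals $H_s$, injectivity of $D_x H_s(\sol{\bs{x}}^{[0]}(s))$ forcing the identification of the derivative blocks, and smooth $s$-dependence from the uniform contraction principle). No gaps.
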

\begin{proof}
This follows from the equivalence of $\bs{H}_s(\bs{x})$ and~\eqref{e:three}, as well as Remarks~\ref{r:uniquederivatives} and~\ref{r:triangular}.
\end{proof}

We now assume we have successfully applied Theorem~\ref{thm:radii_bound} to~$\bs{H}_s$. 
In particular, let $\hat{\bs{x}}_0 = (\hat{\bs{v}}_0,\hat{\bs{\lambda}}_0)$ and $\hat{\bs{x}}_1= (\hat{\bs{v}}_1,\hat{\bs{\lambda}}_1)$ be the end points of a line segment for which we have found, through Theorem~\ref{thm:radii_bound}, a solution curve 
\begin{equation}\label{e:insideC}
  \sol{\bs{x}}(s)=(\sol{\bs{v}},\sol{\bs{\lambda}})(s) \in \mathcal{C}_{\hr}
  \qquad \text{for } s\in [0,1]. 
\end{equation}
We also assume that the corresponding smoothness condition~\eqref{e:smoothness} holds, so that we may apply Lemma~\ref{l:extendedradpoly}.
Then we can use the following result to verify that a nondegenerate saddle-node bifurcation occurs.
\begin{proposition}\label{prop:fold}
Let $1\leq j \leq m$. Assume
\begin{subequations}
\label{e:signcondsadnod}
\begin{alignat}{1}
  \big(\hat{\bs{\lambda}}^{[1]}_0\bigr)_j + \hr &< 0 ,  \label{e:endpoint0}\\
  \bigl(\hat{\bs{\lambda}}^{[1]}_1\bigr)_j - \hr &> 0 , \label{e:endpoint1}\\
  \min\Bigl\{\bigl(\hat{\bs{\lambda}}^{[2]}_0\bigr)_j,
	  		 \bigl(\hat{\bs{\lambda}}^{[2]}_1\bigr)_j\Bigr\} -  \hr &> 0 .
			 \label{e:secondpositive}
\end{alignat}
\end{subequations}
Then the solutions curve $\sol{\bs{x}}(s)$ undergoes a unique nondegenerate fold bifurcation (folding to the right) with respect to $\lambda_j$ in the interval $s\in [0,1]$.
\end{proposition}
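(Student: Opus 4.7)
The plan is to combine three ingredients: the identification of the components of $\sol{\bs{x}}(s)$ with $\sol{x}(s)$ and its first two $s$-derivatives (Lemma~\ref{l:extendedradpoly}), the tube containment~\eqref{e:insideC} which gives $\hr$-enclosures for each component along the curve, and finally a standard intermediate value / monotonicity argument in the scalar function $\sol{\lambda}'_j(s)$.

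First I would unpack~\eqref{e:insideC}. Writing $\hat{\bs{x}}_s = (1-s)\hat{\bs{x}}_0 + s\hat{\bs{x}}_1$, the tube containment together with the definition of the max-norm~\eqref{e:maxnorm} yields, for $i=1,2$ and every $s\in[0,1]$,
\[
\Bigl|\bigl(\sol{\bs{\lambda}}^{[i]}(s)\bigr)_j - \bigl[(1-s)\bigl(\hat{\bs{\lambda}}^{[i]}_0\bigr)_j + s\bigl(\hat{\bs{\lambda}}^{[i]}_1\bigr)_j\bigr]\Bigr| \leq \hr.
\]
By Lemma~\ref{l:extendedradpoly} these components are exactly $\sol{\lambda}'_j(s)$ and $\sol{\lambda}''_j(s)$. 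Evaluating the first line at the endpoints and invoking~\eqref{e:endpoint0}--\eqref{e:endpoint1} gives $\sol{\lambda}'_j(0) < 0$ and $\sol{\lambda}'_j(1) > 0$.

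Next, the second estimate combined with~\eqref{e:secondpositive} yields, for every $s\in[0,1]$,
\[
\sol{\lambda}''_j(s) \geq (1-s)\bigl(\hat{\bs{\lambda}}^{[2]}_0\bigr)_j + s\bigl(\hat{\bs{\lambda}}^{[2]}_1\bigr)_j - \hr \geq \min\Bigl\{\bigl(\hat{\bs{\lambda}}^{[2]}_0\bigr)_j,\bigl(\hat{\bs{\lambda}}^{[2]}_1\bigr)_j\Bigr\} - \hr > 0,
\]
so $\sol{\lambda}''_j$ is strictly positive throughout $[0,1]$. Hence $s \mapsto \sol{\lambda}'_j(s)$ is continuous (the curve is smooth by the discussion preceding Definition~\ref{defn:saddle-node}, using the smoothness condition~\eqref{e:smoothness} for the extended system) and strictly increasing, so the sign change established above produces a \emph{unique} $\sstar\in(0,1)$ with $\sol{\lambda}'_j(\sstar)=0$, while $\sol{\lambda}''_j(\sstar) > 0$, meeting the definition of a nondegenerate fold. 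The positivity of $\sol{\lambda}''_j(\sstar)$ identifies the local minimum of $\lambda_j$ along the curve at $\sstar$, which is the claimed ``folding to the right''.

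I do not anticipate a serious obstacle; all steps are direct consequences of the infrastructure already in place. The only point that requires care is keeping straight that the tube containment~\eqref{e:insideC} applies to the \emph{extended} variable $\sol{\bs{x}}(s)$, so that the $\hr$-enclosure propagates to each of the three blocks $\sol{\bs{x}}^{[0]},\sol{\bs{x}}^{[1]},\sol{\bs{x}}^{[2]}$ simultaneously, and that Lemma~\ref{l:extendedradpoly} then legitimately converts enclosures on $(\hat{\bs{\lambda}}^{[1]})_j$ and $(\hat{\bs{\lambda}}^{[2]})_j$ into enclosures on $\sol{\lambda}'_j$ and $\sol{\lambda}''_j$.
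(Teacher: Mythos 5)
Your proposal is correct and follows essentially the same route as the paper: extract the $\hr$-enclosures for $\sol{\lambda}'_j$ and $\sol{\lambda}''_j$ from~\eqref{e:insideC} via Lemma~\ref{l:extendedradpoly}, apply the intermediate value theorem to the sign change at the endpoints, and use the uniform positivity of $\sol{\lambda}''_j$ (obtained from the convex-combination bound) to get uniqueness, nondegeneracy, and the rightward fold. No gaps.
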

\begin{proof}
Let $\sol{\lambda}_j(s)=\sol{\bs{\lambda}}^{[0]}_j(s)$.
For $i=0,1,2$, denote $\hat{\bs{\lambda}}^{[i]}_s = (1-s) \hat{\bs{\lambda}}^{[i]}_0 + s \hat{\bs{\lambda}}^{[i]}_1$.
For $i=0,1,2$, let $\sol{\lambda}^{(i)}_j(s)$ denote the $i$-th derivative 
of $\sol{\lambda}_j(s)$.
It follows from Lemma~\ref{l:extendedradpoly} and Equation~\eqref{e:insideC}
that 
\begin{equation}\label{e:hrclose}
  |\sol{\lambda}^{(i)}_j(s)- (\hat{\bs{\lambda}}^{[i]}_s)_j |=
  |\sol{\bs{\lambda}}^{[i]}_j(s)- (\hat{\bs{\lambda}}^{[i]}_s)_j | \leq \hr
  \qquad \text{for } i=0,1,2.
\end{equation}
It then follows from~\eqref{e:endpoint0}--\eqref{e:endpoint1} that $\sol{\lambda}'_j(0)<0$
while $\sol{\lambda}'_j(1)>0$, hence by the intermediate value theorem 
there exists an $\sstar \in (0,1)$ such that $\sol{\lambda}'_j(\sstar)=0$. Furthermore, since 
\[
        \bigl(\hat{\bs{\lambda}}^{[2]}_s\bigr)_j = 
 (1-s)  \bigl(\hat{\bs{\lambda}}^{[2]}_0\bigr)_j +
    s   \bigl(\hat{\bs{\lambda}}^{[2]}_1\bigr)_j ,
\]
it follows from~\eqref{e:hrclose} and~\eqref{e:secondpositive}
that $\sol{\lambda}''_j(s) >0$ for all $s \in [0,1]$.
In particular, $\sol{\lambda}''_j(\sstar) >0$ and
$\sol{\lambda}_j(s) \geq \sol{\lambda}_j(\sstar)$ for all $s\in[0,1]$,
and besides $s=\sstar$ there is no other zero of $\sol{\lambda}'_j(s)$ on $[0,1]$. 
\end{proof}

Alternative conditions which lead to the same result, but with the curve folding to the left, are
\begin{subequations}
\label{e:altsigncondsadnod}
\begin{alignat}{1}
  \big(\hat{\bs{\lambda}}^{[1]}_0\bigr)_j - \hr &> 0 , \\
  \bigl(\hat{\bs{\lambda}}^{[1]}_1\bigr)_j + \hr &< 0 , \\
  \max\Bigl\{\bigl(\hat{\bs{\lambda}}^{[2]}_0\bigr)_j,
	  		 \bigl(\hat{\bs{\lambda}}^{[2]}_1\bigr)_j\Bigr\} +  \hr & < 0 .
\end{alignat}
\end{subequations}

%\corrc New remark. Elena: does this capture what we should say about the rescaling? It does not go into implementation, but does flag the need for rescaling.  JB Seems good to me. E<<>>
\begin{remark}\label{r:rescalingderivatives}
Since the stepsize may be quite (or even very) small in practice, the first derivative $\bs{\sol{x}}^{[1]}(s)=(\bs{\sol{x}}^{[0]})'(s)$ can be of an entirely different size than $\bs{\sol{x}}^{[0]}(s)$, and for the second derivative this holds \emph{a fortiori}. In such a situation, the variable components $\bs{x}^{[0]}$, $\bs{x}^{[1]}$ and $\bs{x}^{[2]}$ are not of commensurable magnitude, and a uniform norm, as in~\eqref{e:maxnorm} is not appropriate, and indeed using it unaltered makes the conditions~\eqref{e:signcondsadnod} unachievable. This obstacle is overcome by rescaling the variable components $\bs{x}^{[1]}$ and $\bs{x}^{[2]}$  appropriately, with scale parameters that are linear and quadratic in the stepsize, respectively.

Furthermore, since inequalities~\eqref{e:endpoint0}--\eqref{e:endpoint1} are evaluated in the endpoints only, we may apply a noncontinuation version of Theorem~\ref{thm:radii_bound} to each endpoint separately (for the smaller extended system discussed in Remark~\ref{r:precheck_saddle}), which helps in verifying~\eqref{e:endpoint0}--\eqref{e:endpoint1}.
\end{remark}

%\corrc Another new remark. JB <<>>
% \begin{remark}\label{r:unique}
% Regarding uniqueness, in view of Remark~\ref{r:triangularA} and inequality~\eqref{e:smoothness}, applying Theorem~\ref{thm:radii_bound} to $\bs{H}_s$
% identifies a unique, smooth solution curve $\{\sol{\bs{x}}^{[0]}(s)\}_{s\in[0,1]}$ in a cylinder $\widetilde{\bs{\mathcal{C}}}^{[0]}_{\hr} \subset X$ of radius~$\hr$ around the line segment~$\{ \hat{\bs{x}}^{[0]}_s\}_{s\in[0,1]}$,
% see Remark~\ref{r:cylinder}.
% Proposition~\ref{prop:fold} provides conditions which guarantee that a unique nondegenerate fold occurs along this solution curve, and that  with respect to $\lambda_j$ there are no other folds, degenerate or not, along $\{\sol{\bs{x}}^{[0]}(s)\}_{s\in[0,1]}$.
% \end{remark}

\begin{remark}\label{r:precheck_saddle}
In a similar, but easier, fashion one may verify that \emph{no} bifurcation occurs with respect to~$\lambda_{j'}$ by checking that
\[
  \min\Bigl\{\bigl(\hat{\bs{\lambda}}^{[1]}_0\bigr)_{j'},
	  		 \bigl(\hat{\bs{\lambda}}^{[1]}_1\bigr)_{j'}\Bigr\} -  \hr > 0 
\quad\text{or}\quad
  \max\Bigl\{\bigl(\hat{\bs{\lambda}}^{[1]}_0\bigr)_{j'},
	  		 \bigl(\hat{\bs{\lambda}}^{[1]}_1\bigr)_{j'}\Bigr\} +  \hr < 0 .
\]
Naturally, since this does not involve the second derivative, if one merely wants to exclude bifurcations it suffices to apply Theorem~\ref{thm:radii_bound} to the smaller extended system for $(\bs{x}^{[0]},\bs{x}^{[1]})$. 
\end{remark}

%\corrc The archlength parameter can be reascaled and a rescaling of $s$ entails a rescaling of the first and second derivative. We therefore want to consider $s\in [0, S]$ with an appropriately chosen $S$. In particular, $S$ is chosen such that the order of magnitude of $\|x'_0-x'_1\| \approx \|x_0-x_1\|$, that is $S = 1/h$, with $h$ the step size of the continuation.  E<<>>

%\corrc Yes, I have tried to ``formalize'' this in Remark~\ref{r:rescalingderivatives}.  JB <<>>

%\corrc Seems good and enough for me. E<<>>

\section{Eigenvalue considerations}
\label{s:eigenvalues}
%!TEX root = bifcont.tex

%\corrc Still need to reorder variables in other sections. JB <<>>
%\corrc Still need to resolve notation clashes (especially with $w$ in the introduction) JB <<>>

Traditionally, saddle-node bifurcations are identified in terms of a simple eigenvalue crossing $0$. Here we discuss how our nondegenerate folds, as described in Definition~\ref{defn:saddle-node} by the local parabolicity in~\eqref{e:nondegsadnod} of the solution curve, relates to such eigenvalue considerations.
We consider the case of a fold bifurcation for~\eqref{e:prelim} with respect to $\lambda_2 = \mu$.
In particular, $\lambda=(\tau,\mu) \in \R^2$ and
$h:\R^n \times \R^{2} \to \R^{n}$ is given by $\tau f(u,\mu)$,
where $f$ is the vector field in \eqref{e:initial} and $\tau$ is the normalized period.

We start by describing the information on the eigenvalues of 
$D_{x} H_s(\sol{x}(s))$,
and subsequently relate this to stability information for the periodic orbits of~\eqref{e:initial}.
We collect the variables $v$ and $\tau$ in $w=(v,\tau) \in  \Y=(\ell^1_\nu)^n \times \C$. 
Let
\[
  \tH (x) \bydef \begin{bmatrix} F(x) \\ G^\phase(x) \end{bmatrix} = 0 .
\]
We then write 
\begin{equation}\label{e:blockDxHs}
  D_x H_s(\sol{x}(s)) = \left [
  \begin{array}{cc}
   D_w \tH(\sol{x}(s)) & D_\mu \tH(\sol{x}(s)) \\
   q^\cont_w(s) & q^\cont_\mu(s)
  \end{array}
  \right] ,
\end{equation}
where we unravel the notation for the continuation equation~\eqref{e:continuationequation} through the use of $q^\cont_s = (q^\cont_w, q^\cont_\mu)(s) =  (q^\cont_v, q^\cont_\tau,q^\cont_\mu)(s)$.
We infer from Remark~\ref{r:injectivity}
that $D_x H_s(\sol{x}(s))$ is invertible as a map from $X$ to
$  X_1 = (\ell^1_{\nu,1})^n \times \C^m$,
where
\[ \ell^1_{\nu,1} \bydef \{ \tilde{v} \in \ell^1_{\nu} \,:\,  \K \tilde{v} \in \ell^1_{\nu} \} ,
\]
which is a Banach space when equipped with the norm $\| \tilde{v} \|_{\nu,1} = \sum_{k\in\Z} |\tilde{v}_k| \nu^{|k|} (|k|+1)$.

Our central object of interest is
\[
  \Q_s \bydef D_w \tH(\sol{x}(s)),
\]
which is conjugation invariant, i.e., $\Q_s w^* = (\Q_s w)^*$, since $\tH$ is and $\sol{x}(s) \in \XS$.
Furthermore, $\Q_s$ is a bounded operator from~$\Y$ to $\Y_1 \bydef (\ell^1_{\nu,1})^n \times \C$,
but it may also be interpreted as an unbounded operator on 
 $\Y$. 
Finally, $\Q_s$ is Fredholm of index $0$. 
%\corrc Should we say anything more about Fredholm in this paper at all? JB <<>>

We note that, by differentiating the identity $\tH(\sol{x}(s))=0$ with respect to $s$, we have 
\begin{equation}\label{e:easyQslinear}
\Q_s \sol{w}'(s) + D_\mu \tH(\sol{x}(s)) \sol{\mu}'(s)=0.
\end{equation}
Hence at $s=\sstar$, where $\sol{\mu}'(\sstar)=0$, we have that $\Q_{\sstar}$, and thus $D_w \tH(\sol{x}(\sstar))$, has $\sol{w}'(\sstar)$ as an eigenvector associated to the zero eigenvalue.
Indeed $\sol{w}'(\sstar)$ is not trivial in view of smoothness of the solution curve (Inequality~\eqref{e:smoothness}).

The next remark guarantees that $\Q_s$ does not have a zero eigenvalue for $s\neq \sstar$.
\begin{remark}\label{r:kernelimpliesvertical}
If $\Q_s$ has an eigenvector $w_0$ associated to the zero eigenvalue for some $s \in [0,1]$,
then it follows from invertibility of $D_x H_s(\sol{x}(s))$ that $\langle  q^\cont_w(s) ,w_0 \rangle \neq 0$. In turn it follows that $W_0 \bydef (c_0 w_0,0)$ with 
\[
  c_0 \bydef - \frac {D_s G^\cont_s(\sol{x}(s)) } { \langle q^\cont_w(s)
    ,w_0 \rangle} ,
\] 
solves
$D_x H_s(\sol{x}(s)) W_0 = -D_s H_s(\sol{x}(s))$. Hence, by invertibility of $D_x H_s(\sol{x}(s))$, we find that $\sol{x}'(s)=(c_0 w_0,0)$,
because we have the identity
\[
  D_x H_s(\sol{x}(s)) \sol{x}'(s)  + D_s H_s(\sol{x}(s)) =0
  \qquad\text{for all } s \in [0,1].
\]
In particular, this implies that $\sol{\mu}'(s)=0$, so that we conclude from the uniqueness statement in Proposition~\ref{prop:fold}
that $\Q_s$ has eigenvalue zero at $s=\sstar$ only.
\end{remark}

Next we argue that the geometric multiplicity of the zero eigenvalue of $\Q_{\sstar}$ is $1$. Namely, if the $0$-eigenspace of $\Q_{\sstar}$ is two (or higher) dimensional, then it is straightforward to construct a nontrivial element in the kernel of $D_x H_s(\sol{x}(s))$, which contradicts its injectivity, which was established in Remark~\ref{r:injectivity}.
It will take a bit more work (see below) to analyze the \emph{algebraic} multiplicity of the zero eigenvalue, which may be higher than $1$.

We introduce the dual space $\Y'=(\ell^\infty_{\nu^{-1}})^n \times \C$,
where
$\ell^\infty_{\nu^{-1}} \bydef
\{ \tilde{v} \in \C^\Z \,:\, \sup_{k\in \Z} |\tilde{v}_k| \nu^{-|k|} <\infty\}$,
and let $\checkwstar \in \Y'$ be an eigenvector of the transpose $\Q'_{\sstar}$ associated to the zero eigenvalue.
Here the transpose has the usual definition:
$\langle \Q'_{\sstar} \check{w} , w \rangle = \langle  \check{w} , \Q_{\sstar} w \rangle $ for all $w\in \Y$, $\check{w}\in \Y'$,
where the dual pairing uses the slightly abused notation, cf~\eqref{e:bilinear}, 
\[
\langle \check{w} , w \rangle  =
\sum_{i=1}^{n} \sum_{k\in \Z} (\check{v}_i)_k (v_i)_ k 
	 								+ \check{\tau} \;\! \tau   .
\]
Since we can restrict $\Q_{\sstar}$ to the conjugate symmetric set $\{ w \in \Y : w^*=w\}$, and the eigenvector $\sol{w}'(\sstar)$ lies in this set, we may also assume that $\checkwstar^*=\checkwstar$.
Then the range of $\Q_{\sstar}$ can be characterized as
\begin{equation}\label{e:rangeQsstar}
  \range \Q_{\sstar} = \bigl\{ w \in \Y_1 : \langle \checkwstar , w \rangle  =0 \bigr\} .
\end{equation}

We now use a standard trick to obtain information about eigenvalues.
Let $0 \neq \checkwstar$ be such that $\Q'_{\sstar} \checkwstar =0$. 
Invertibility of $D_x H_s(\sol{x}(s))$ implies that
its transpose is invertible as a linear map from $X_1'$ to $X'$.
Hence we see from~\eqref{e:blockDxHs} that
\begin{equation}\label{e:nontriv1}
  \langle \checkwstar , D_\mu \tH(\sol{x}(\sstar)) \rangle  \neq 0.
\end{equation}
% A consequence of~\eqref{e:nontriv1} is that $D_\mu \tH(\sol{x}(s))$ is nontrivial.
% Hence it follows from~\eqref{e:easyQslinear} that whenever $\Q_s$ has eigenvalue zero, then $\sol{\mu}'(s)=0$,
% so that we conclude from the uniqueness statement in Lemma~\ref{l:fold}
% that $\Q_s$ has eigenvalue zero at $s=\sstar$ only.
The second derivative $\sol{x}''(s)=(\sol{w}''(s),\sol{\mu}''(s))$ satisfies
\begin{alignat*}{1}
   D_{w} \tH(\sol{x}(s)) \sol{w}''(s)
   + D_{\mu} \tH(\sol{x}(s)) \sol{\mu}''(s)   &=  \\
   &\hspace*{-4cm}
   - D^2_{w,w} \tH(\sol{x}(s))[\sol{w}'(s),\sol{w}'(s)]  
  - 2 D^2_{w,\mu} \tH(\sol{x}(s))[\sol{w}'(s),\sol{\mu}'(s)] 
  - D^2_{\mu,\mu} \tH(\sol{x}(s))[\sol{\mu}'(s),\sol{\mu}'(s)] .
\end{alignat*}
When we evaluate this at $s=\sstar$ and apply $\checkwstar$ to the result, we obtain
\begin{equation}\label{e:identity2}
    \langle \checkwstar , D_\mu \tH(\sol{x}(\sstar))  \rangle \sol{\mu}''(\sstar) = -    \langle \checkwstar , D^2_{w,w} \tH(\sol{x}(\sstar))[\sol{w}'(\sstar),\sol{w}'(\sstar)] \rangle .
\end{equation}
Since  $\mu''(\sstar) \neq 0$ by nondegeneracy of the fold (see~\eqref{e:nondegsadnod} and Proposition~\ref{prop:fold}), we conclude from~\eqref{e:nontriv1} and~\eqref{e:identity2}  that   
\begin{equation}\label{e:nontriv2}
   \langle \checkwstar , 
   D^2_{w,w} \tH(\sol{x}(\sstar))[\sol{w}'(\sstar),\sol{w}'(\sstar)]
   \rangle   \neq 0.
\end{equation}	
We note that all of the elements $\checkwstar$, $D_\mu \tH(\sol{x}(\sstar)) $
and $D^2_{w,w} H_s(\sol{x}(\sstar))[\sol{w}'(\sstar),\sol{w}'(\sstar)] $
are conjugate symmetric, which is consistent with $\sol{\mu}''(\sstar)$ being real.
	
We are now ready to analyze the situation associated to higher algebraic multiplicity of the zero eigenvalue.
The eigenvalue problem for $\Q_s$ is 
\[
  \QQ_\alpha(w,s) \bydef \begin{bmatrix}
   \Q_s w - \alpha w \\  \langle q^\cont_w(\sstar) , w \rangle - \langle q^\cont_w(\sstar) ,\sol{w}'(\sstar) \rangle 
   \end{bmatrix} =0,
\]
which has a zero $\QQ_0(\sol{w}'(\sstar),\sstar)=0$.
The derivative at this zero is
\[
  D \QQ_0 
 = \begin{bmatrix}
   \Q_{\sstar} 
   & D^2_{w,w} \tH(\sol{x}(\sstar)) [\sol{w}'(\sstar),\sol{w}'(\sstar)]  \\
   q^\cont_w(\sstar) & 0 
  \end{bmatrix} .
\]
Since $D_x H_{\sstar}(\sol{x}(\sstar))$ is invertible
we infer that, see~\eqref{e:blockDxHs}, the range of 
\[ 
 \begin{bmatrix}
   \Q_{\sstar}\\
     q^\cont_w(\sstar) 
  \end{bmatrix}
\]
has co-dimension $1$ in $X_1$.
It then follows from~\eqref{e:rangeQsstar} and \eqref{e:nontriv2} that
$D \QQ_0$ is invertible.
Hence we conclude from the implicit function theorem that  $\QQ_\alpha(w,s)=0$ has a locally unique smooth branch of solutions $\QQ_\alpha(\bar{w}(\alpha),\bar{s}(\alpha))=0$.

The local dependence of $\bar{s}$ on $\alpha$ is governed by the algebraic multiplicity of the zero eigenvalue as follows.
By differentiating $D_w \tH(\sol{x}(\bar{s}(\alpha))) \bar{w}(\alpha) - \alpha \bar{w}(\alpha) =0$ with respect to $\alpha$ we obtain
\begin{alignat}{1}
  &D^2_{w,w} \tH(\sol{x}(\bar{s}(\alpha))) [\sol{w}'(\bar{s}(\alpha)),\bar{w}(\alpha)]  \bar{s}'(\alpha)+ 
  D^2_{\mu,w} \tH(\sol{x}(\bar{s}(\alpha))) [\sol{\mu}'(\bar{s}(\alpha)),\bar{w}(\alpha)]
  \bar{s}'(\alpha) \nonumber \\
 & \hspace*{7.5cm}+ D_w \tH(\sol{x}(\bar{s}(\alpha))) \bar{w}'(\alpha) - \alpha \bar{w}'(\alpha) -\bar{w}(\alpha)=0.
  \label{e:differentiatealpha}
\end{alignat}
Substituting $\alpha=0$ we obtain (recalling that $\Q_{\sstar}=D_w \tH(\sol{x}(\sstar)$)
\begin{equation}\label{e:secondfirstalpha}
 D^2_{w,w} \tH(\sol{x}(\sstar)) [\sol{w}'(\sstar),\sol{w}'(\sstar)]     
 \bar{s}'(0)  + \Q_{\sstar} \bar{w}'(0) = \bar{w}(0),
\end{equation}
and applying $\checkwstar$ to this  result leads to
\begin{equation}\label{e:secondonlyalpha}
  \bigl\langle \checkwstar , D^2_{w,w} \tH(\sol{x}(\sstar)) [\sol{w}'(\sstar),\sol{w}'(\sstar)]  \bigr\rangle  \, \bar{s}'(0) 
  =  \langle \checkwstar , \sol{w}'(\sstar) \rangle.
\end{equation}
We see from~\eqref{e:nontriv2} that $\bar{s}'(0) \neq 0$ if and only if $\langle \checkwstar , \sol{w}'(\sstar) \rangle \neq 0$. In view of~\eqref{e:rangeQsstar} 
the latter is equivalent to $\sol{w}'(\sstar)$ not being in the range of $\Q_{\sstar}$,
i.e., the algebraic multiplicity of the zero eigenvalue being 1. The contrapositive is that $\bar{s}'(0)$ vanishes if the algebraic multiplicity is larger than 1. We consider each case below.

In the former (algebraic multiplicity 1) case we see from~\eqref{e:secondonlyalpha}, combined with conjugate symmetry of  
$\checkwstar$, that $D^2_{w,w} \tH(\sol{x}(\sstar)) [\sol{w}'(\sstar),\sol{w}'(\sstar)]$
and $\sol{w}'(\sstar)$, 
 that $\bar{s}'(0) \in \R \setminus \{0\}$.
By inverting the relation, we find that $\alpha = (\bar{s}'(0))^{-1} (s-\sstar) + O((s-\sstar)^2)$, hence the eigenvalue crosses zero with nonzero speed as  we go through the fold and we have a classical saddle-node bifurcation.

In the latter (algebraic multiplicity larger than 1) case we differentiate~\eqref{e:differentiatealpha} again, and substitute $\alpha=0$. Using the information that $\bar{s}'(0)=0$ and
\begin{equation}\label{e:generalized eigenvector}
  D_w \tH(\sol{x}(\sstar)) \bar{w}'(0) = \bar{w}(0),
\end{equation}
 which follows from~\eqref{e:secondfirstalpha}, we obtain
\[
 D^2_{w,w} \tH(\sol{x}(\sstar)) [\sol{w}'(\sstar),\sol{w}'(\sstar)]     
 \bar{s}''(0)  + \Q_{\sstar} \bar{w}''(0) = 2 \bar{w}'(0) .
\]
Once again applying $\checkwstar$ to this, we find
\[
  \bigl\langle \checkwstar , D^2_{w,w} \tH(\sol{x}(\sstar)) [\sol{w}'(\sstar),\sol{w}'(\sstar)]  \bigr\rangle \bar{s}''(0) 
  =  2 \langle \checkwstar , \bar{w}'(0) \rangle.
\]
There are two possible scenarios. Either $\bar{s}''(0) \neq 0$ and $\bar{w}'(0)$ is not in range of $\Q_{\sstar}$ and thus, in view of~\eqref{e:generalized eigenvector},
the algebraic multiplicity of the zero eigenvalue is $2$. 
Or $\bar{s}''(0) =0$ and 
$ \Q_{\sstar} \bar{w}''(0) = 2 \bar{w}'(0)$ and 
the algebraic multiplicity of the zero eigenvalue is larger than $2$. 
 
We can now repeat the above arguments inductively. We conclude that 
$s=\sstar + C \alpha^N + O(\alpha^{N+1})$
for some $C \in \R \setminus\{0\}$, where $N$ is the algebraic multiplicity of the zero eigenvalue of 
$\Q_{\sstar}=D_w \tH(\sol{x}(\sstar))$ at the fold (which always has geometric multiplicity 1).
Inverting the relation, we see that 
$\alpha = C^{1/N} (s-\sstar)^{1/N} + O((s-\sstar)^{(N+1)/N})$, which gives detailed information on the dynamics of the $N$ eigenvalues of $\Q_s=D_w \tH(\sol{x}(s))$ that coalesce at $0$ when we move through the fold. In particular, the number of negative eigenvalues changes by $1$ (not counting complex conjugate pairs).

Before we discuss the implications of this information for the eigenvalue problem of $D_v F(\sol{x}(s))$, we exclude some marginal behaviour. More precisely, in the next two remarks, we exclude two possible ``trivialities'' of the fold.
\begin{remark}
First, the solution at the fold cannot be an equilibrium solution.
By contradiction, assume that $\sol{v}_k(\sstar)=0$ for all $k \neq 0$,
and $f(\sol{v}_0(\sstar),\sol{\mu}(\sstar))=0$.
We write
\begin{equation}\label{e:blockDwtH}
  D_w \tH(\sol{x}(\sstar)) = \begin{bmatrix}
   D_v F (\sol{x}(\sstar)) & D_\tau F (\sol{x}(\sstar)) \\
   q^\phase_v & 0
  \end{bmatrix} .
\end{equation}
Since the kernel of $D_w \tH(\sol{x}(\sstar))$ is one dimensional,
and $D_\tau F (\sol{x}(\sstar)) = f(\sol{v}_0(\sstar),\sol{\mu}(\sstar))$ vanishes, we conclude that that $(v,\tau)=(0,1)$ spans the kernel.
By differentiating the identity $\tH(\sol{x}(s))=0$  
with respect to $s$, evaluating at $s=\sstar$ and using
that $\sol{\mu}'(\sstar)=0$, we then conclude that 
$\sol{v}'(\sstar)=0$ and $\sol{\tau}'(\sstar) \neq 0$.
In turn this implies that 
\[
 D^2_{w,w} \tH(\sol{x}(\sstar))[\sol{w}'(\sstar),\sol{w}'(\sstar)]
 = D^2_{\tau,\tau} \tH(\sol{x}(\sstar))
[\sol{\tau}'(\sstar),\sol{\tau}'(\sstar)],
 \]
which vanishes since $\tH$ is linear in $\tau$. This contradicts~\eqref{e:nontriv2}.
\end{remark}

% where $\hat{q} = q^\phase_v$, \red{see~\eqref{e:choiceofphaseq}, notation inconsistent!}.
% The main part $D_v F (\sol{x}(\sstar))$ would be of the form
% \[
%   (D_v F (\sol{x}(\sstar)) \tilde{v})_{k} =
%   \Bigl( -\imag k I_n
%   + \sol{\tau}(\sstar) D_u f(\sol{v}_0(\sstar),\sol{\mu}(\sstar)) \Bigr)
%    \tilde{v}_k ,
% \]
% i.e., diagonal in $k$, with $v_k \in \C^n$ and $I_n$ the identity matrix on $\C^n$.
% Furthermore, $D_\tau F (\sol{x}(\sstar))=\tau f(\sol{v}_0(\sstar),\sol{\mu}(\sstar)$ vanishes, and  $\red{\hat{q}_0}=(q^\cont_v)_0(s)=0$ by~\eqref{e:choiceofphaseq}.
% It follows from the fact that the kernel of $D_w \tH(\sol{x}(\sstar))$ is one dimensional that $D_u f(\sol{v}_0(\sstar),\sol{\mu}(\sstar))$ must be invertible. Moreover, we find that this kernel is spanned by $ $
% And the (unique up to scalar) solution is $\sol{\tau}'(\sstar)=1$, $\sol{v}'(\sstar)=0$.

\begin{remark}\label{r:notphaseshift}
The second scenario that we would like to exclude is that the fold inadvertently occurs due to a shift in time. In particular, we want to confirm that
\[
  \bigl\langle q^\phase_v  , \imag \K \sol{v}(\sstar)\bigr\rangle \neq 0. 
\]
This is achieved through a computational check, cf.~the smoothness condition~\eqref{e:smoothness}:
\[
 \hat{r}  \, \langle q^\phase , b \rangle 
\neq
 \langle  q^\phase , \imag \K [(1-s)\hat{x}_0 +s \hat{x}_1]
 \rangle  
\quad \text{ for any } s \in [0,1] \text{ and } b\in B_1(0),
\]
which in practice is satisfied since 
$\overline{\imag \K {\hx}_0} \approx q^\phase
\approx\overline{\imag \K {\hx}_1}$ and $\hr \ll 1$.
\end{remark}

We now relate eigenvalues of $\Q_s$ to eigenvalues of $D_v F(\sol{x}(s))$,
which in turn correspond to characteristic multipliers of
the periodic solution
\[
  \sol{u}(t) = \sum_{k\in \Z} \sol{v}_k e^{\imag k t}.  
\]
Namely, an eigenvalue-eigenvector pair $(\tilde{\alpha},\tilde{a})$ of $D_v F(\sol{x}(s))$ solves
\[
   \imag \K \tilde{a} - \sol{\tau}(s)  D_v \widehat{f} (\sol{v}(s),\sol{\mu}(s)) \tilde{a}  = \tilde{\alpha} \tilde{a}.
\]
Hence $a(t)=\sum_{k\in \Z} \tilde{a}_k e^{\imag k t}$ is a $2\pi$-periodic solution of the linearized problem
\[
  \dot{a}(t) - \sol{\tau}(s) D_u f(\sol{u}(t;s),\sol{\mu}(s)) a(t) = \tilde{\alpha} a(t),
\]
and $e^{2\pi \tilde{\alpha}}$ is thus a characteristic multiplier of the system.
In particular, the eigenvalues of $D_v F(\sol{x}(s))$ contain the information about linearized stability of the periodic orbit $\sol{u}$.
We now return to the relation between the eigenvalues of $\Q_s$ and the eigenvalues of $D_v F(\sol{x}(s))$.

Let $\tilde{w}=(\tilde{v},\tilde{\tau})$ be an eigenvector with eigenvalue $\alpha$ of $\Q_s$. For generalized eigenvectors a similar analysis goes through; we leave the details to the reader.
Since $D_v F (\sol{x}(s))$ has a zero eigenvector $V_0 \bydef \imag \tau_0^{-1} \K  \sol{v}(s)$,  with $\tau_0 \bydef \sol{\tau}(s)$, and $D_\tau F (\sol{x}(s)) = -\widehat{f}(\sol{x}(s)) = -\imag \K  \tau_0^{-1} \sol{x}(s) = - V_0$,
we infer  that
\begin{equation}\label{e:extendedeigenvalueproblem}
  \Q_s \begin{bmatrix}
   \tilde{v} \\ \tilde{\tau} 
  \end{bmatrix}
  =
    \begin{bmatrix}
   D_v F (\sol{x}(s)) & - V_0  \\
   q^\phase_v & 0
  \end{bmatrix}
 \begin{bmatrix}
   \tilde{v} \\ \tilde{\tau} 
  \end{bmatrix} =
  \alpha
  \begin{bmatrix}
   \tilde{v} \\ \tilde{\tau} 
  \end{bmatrix} .
\end{equation}
In view of Remark~\ref{r:notphaseshift} we have that $\langle q^\phase_v , V_0 \rangle \neq 0$ for $s$ near $\sstar$.

We now consider three cases: $\alpha=0$, $\alpha^2 = -\langle q^\phase_v , V_0 \rangle$,
and all other $\alpha$.

If $\alpha=0$ then it follows immediately  from~\eqref{e:extendedeigenvalueproblem} that
$D_v F (\sol{x}(s)) \tilde{v} = \tilde{\tau} V_0$, hence $\tilde{v}$ is either an eigenvector or a generalized eigenvector for eigenvalue zero of $D_v F (\sol{x}(s))$. Furthermore, $\tilde{v}$ is not a multiple of $V_0$ since $\langle q^\phase_v ,\tilde{v} \rangle = 0$.

If $\alpha \neq 0 $ and $\alpha^2 \neq -\langle q^\phase_v , V_0 \rangle$,
then we set 
$
  \tilde{V}=\tilde{v} + \frac{\tilde{\tau}}{\alpha} V_0
$
and we conclude from~\eqref{e:extendedeigenvalueproblem} that $D_v F (\sol{x}(s)) \tilde{V}= \alpha \tilde{V}$.
Additionally we find $\langle q^\phase_v , \tilde{v} \rangle =\alpha \tilde{\tau}$,
hence if $\tilde{\tau}\neq 0$ then $\langle q^\phase_v , \tilde{V} \rangle = \frac{\tilde{\tau}}{\alpha} (\alpha^2 + \langle q^\phase_v , V_0 \rangle) \neq 0 $. 
We infer that 
$\tilde{V}$ is an eigenvector of $D_v F (\sol{x}(s))$ with eigenvalue $\alpha$. 
If $\tilde{\tau}=0$ then $\tilde{V}=\tilde{v} \neq 0$ and we reach the same conclusion.

If $\alpha^2 = -\langle q^\phase_v , V_0 \rangle$, then
$(\tilde{v},\tilde{\tau})=(V_0,-\alpha)$  solves~\eqref{e:extendedeigenvalueproblem}.
There is no relation to eigenvalues of $D_v F (\sol{x}(s))$.

% \corrc Need to say (and check for ourselves! but without providing any
% details) that this analysis can be extended to generalized eigenvectors! JB
% <<>>

Finally, we note that since $\langle q^\phase_v, V_0 \rangle \neq 0$ at $s=\sstar$,
the eigenvalues $\alpha \approx C^{1/N} (s-\sstar)^{1/N}$
emanating from a fold bifurcation all correspond to eigenvalues of $D_v F (\sol{x}(\sstar))$ and hence to characteristic multipliers of the periodic orbit $\sol{u}(t)$.

\section{Hopf bifurcation}
\label{s:hopf}
%!TEX root = bifcont.tex

%\subsection{Hopf bifurcations}

A Hopf bifurcation is characterized by the junction of a family of equilibria and a family of periodic orbits.
Let $\sy(\mu)$ denote a smooth family of equilibria: $f(\sy(\mu),\mu)=0$ for $\mu \in [\mu_0,\mu_1]$.
Let $\{ \sol{x}(s) \}_{s\in[0,1]}$ be a solution curve of periodic orbits of~\eqref{e:initial}, using the notation introduced in Section~\ref{s:setup}. We write  $\sol{x}(s)=(\sol{v}(s),\sol{\tau}(s),\sol{\mu}(s))$.

In a Hopf bifurcation we have $\sol{v}(\sstar)=\sy(\mu(\sstar))$,
which is to be interpreted as meaning that the Fourier coefficients $\sol{v}(\sstar)$ correspond to the \emph{stationary} state $\sy(\mu(\sstar))$.

\begin{definition} \label{defn:Hopf}
{\em
We say that there is a \emph{nondegenerate Hopf bifurcation} (with respect to $\mu$) at $\mustar \in (\mu_1,\mu_2)$ 
if there is an $\sstar \in (0,1)$ such that 
\begin{equation*}
  \sol{\mu}(\sstar)=\mustar \qquad\text{and}\qquad \sol{v}(\sstar)=\sy(\mustar),
\end{equation*}
while $\sol{v}(s)$ is not a stationary solution for any $s \neq \sstar$, and 
\begin{equation}\label{e:nondeghopf}
  \sol{\mu}'(\sstar) = 0,
  \qquad 
  \sol{\mu}''(\sstar) \neq 0.  
\end{equation}
}
\end{definition}

In this section we explain how to establish such nondegenerate  Hopf bifurcations using a blowup (or desingularization) technique.
Note that we do not impose any eigenvalue restrictions in the description~\eqref{e:nondegsadnod} of a nondegenerate Hopf bifurcation.

\begin{remark}\label{r:hopftau}
Hopf bifurcations with respect to the normalized period $\tau$ also fit into our framework. Although they are not Hopf bifurcations in the traditional sense, these do appear naturally when studying Hamiltonian problems, or stationary states in partial differential equations with periodic boundary conditions when varying the size of the domain. In Section~\ref{s:hamiltonianexample} we present an example.
\end{remark}

% We assume that we have a smooth family of stationary points
% $(\stat{\mu},\sy) \in \R \times \R^{n}$ of~\eqref{e:initial}, that is solutions of
% %
% \[
% f(\mu,u)=0
% \]
% %
% parametrized by $\sigma \in I$. We assume $\stat{\mu}'(\sigma) \neq 0$, so that we may locally interpret the curve of equilibria as being parametrized by $\mu$.
%
%
% A Hopf bifurcation occurs at $\mu =\stat{\mu} (\sigmastar)$ if a pair of simple conjugate eigenvalues of $D_u f(\stat{\mu}(\sigma),\sy(\sigma))$ crosses the imaginary axis at nonzero speed at $\sigma=\sigmastar$. At such a bifurcation a periodic orbit of
% %
% \[
% \dot{u}=f(\mu,u)
% \]
% %
% is born (either for $\mu>\stat{\mu}(\sigmastar)$ or $\mu<\stat{\mu}(\sigmastar)$)
% provided an additional nondegeneracy condition is met (discussed in more detail below).

% \blue{Need to state here what exactly we mean by a nondegenerate Hopf bifurcation.}
% {\color{green} possible: the amplitude of the periodic orbit grows as $\sqrt{\mu}$}

As already explained in the introduction, we rescale time and put ourselves in the  context of~\eqref{e:prelim}. In effect, we set $\lambda=(\tau,\mu)$ and replace $f$ by $\tilde{f}=\tau f$. 
%and we write $g(\lambda)$ for $g(\mu)$. Clearly equilibria of $f$ and
%$\tilde{f}$ are in one-to-one correspondence.
% We note that a (Hopf) bifurcation with respect to $\lambda_1$ is not a
% traditional (Hopf) bifurcation \blue{(this remark should likely be moved to
% Section~\ref{s:saddlenode})}, although such problems may appear when studying
% partial differential equations with periodic boundary conditions when varying
% the size of the domain.
To resolve a branch of periodic orbits all the way into the Hopf bifurcation, we introduce the rescaling $u(t)=\y+a \bar{u}(t)$, with $a\in \R$, and $\y=\y(\mu)\in \R^n$ solving
\[
  f(\y,\mu)=0 .
\]
The ODE for $\bar{u}$ becomes
\begin{equation}\label{e:deftildef}
  \dot{\bar{u}} = \bar{f}(\bar{u},\bar{\lambda}) \bydef 
  \begin{cases}
  \frac{\tilde{f}(\y+a \bar{u},\lambda)-\tilde{f}(\y,\lambda) }{a}  & \text{if } a \neq 0 ,\\
  D_u \tilde{f}(\y,\lambda)\bar{u}   & \text{if } a =0,  
  \end{cases}
\end{equation}
%\corrc why this definition? Elena. <<>>
%\corrc I don't understand the question. Is there something wrong with the definition? JB <<>>
%\corrc no, I just find it less intuitive than the Taylor series introduced previously Elena. <<>>
%\corrc in the code of course just use the series in $a$, but by using~\eqref{e:deftildef} we do not need to introduce notation for such a series. JB. <<>>
where $\bar{\lambda}=(\tau,a,\y,\mu)$.
We note that $\bar{f}$ is again polynomial in $\bar{u}$ (as well as in $a$ and~$y$).
Setting $\bar{g}(\bar{\lambda})=f(\y,\mu) \in \R^n$
the system 
\begin{equation}\label{e:barprelim}
  \left\{ \begin{array}{l} \dot{\bar{u}} = \bar{f}(\bar{u},\bar{\lambda}), \\
  \bar{g}(\bar{\lambda})=0, \\
  \bar{u}(t) \text{ is $2\pi$-periodic},
  \end{array}\right.
\end{equation}
is again of the form~\eqref{eq:h_vector_field}.
What remains is to introduce appropriate phase and continuation equations, as well as an ``amplitude'' equation which lifts the invariance 
under the continuous rescaling $a \to \theta^{-1} a$ and $\bar{u}(t) \to \theta \bar{u}(t)$
for $\theta \in \R$.

We collect all the variables in $x=(v,\bar{\lambda})$,
where $v$ denotes the Fourier coefficients of $\bar{u}$.
As in Section~\ref{s:setup}
we  assume we have two points $\hx_0 = (\hv_0,\hl_0) \in \XS_K$ and $\hx_1 = (\hv_1,\hl_1) \in \XS_K$ which each represent an approximate solution
of the Fourier equivalent of~\eqref{e:barprelim}. 
While it is now hidden in the notation that we are solving~\eqref{e:barprelim} rather than~\eqref{e:prelim}, 
we use the same phase condition 
%$G^\phase(\bar{u})=0$ 
as in Section~\ref{s:setup}: 
$
  G^\phase(x)=   \bigl\langle q^\phase , x \bigr\rangle  =0 ,
%  \G^{\hq_0,\hq_1}_{\langle \hx_0 , \hq_0 \rangle,\langle\hx_1 , \hq_1 \rangle }(x,s) ,
$ 
where $q^\phase = (q^\phase_v,0)  \in X_K$ 
is given by~\eqref{e:choiceofphaseq}.
For the amplitude equation $G^\ameq(x)$ we use
\begin{equation}\label{e:amplitudeequation}
G^\ameq(x) \bydef   \bigl\langle q^\ameq , x \bigr\rangle  -1  =0 ,
% =   \G^{q^\ameq,q^\ameq}_{1,1}(x,s),
\end{equation}
where 
$q^\ameq = (q^\ameq_v,0) \in \XS_K$,
with 
%\corrc Note the new $\K^2$ and the $\hv_{\frac{1}{2}}$. Elena: has this been implemented? JB Yes. E<<>>
%\blue{not yet, working on it today
% at the moment, in the amplitude equation I have }
% $$
% \sum_{k,j} (x_k)_j(\hat x_k)_j - \sum_{k,j} (\hat x_k)_j(\hat x_k)_j =0,
% $$
% \blue{where $x$ is the periodic solution and $\hat x$ is the numerical solution. Then it should be changed to }
% $$
% \sum_{k,j} k^2(x_k)_j(\hat x_k)_j - \sum_{k,j} k^2(\hat x_k)_j(\hat x_k)_j =0,
% $$
% \blue{did I understood it right?}
\begin{equation}\label{e:K2choice}
  (q^\ameq_v)_i =  \overline{\K^2(\hv_{\frac{1}{2}})_i} \qquad\text{for }
  |k| \leq K, 1\leq i \leq n.
\end{equation}
We will slightly abuse the notation for the bilinear form to write
$\langle q^\ameq , x \rangle =  \langle q^\ameq_v , v \rangle$,
and by $[q^\ameq_v]_k \in \C^n$ we will denote the $k$-th Fourier component of $q^\ameq_v$.
%
%
%
% For the amplitude equation $G^\ameq(x)$ we use \red{TO BE ADAPTED}
% \[
%   G^\ameq_s(x)=\G^{\hp_0,\hp_1}_{\langle \hx_0 , \hp_0 \rangle,\langle\hx_1 , \hp_1 \rangle }(x,s) ,
% \]
%  \blue{or perhaps easier:
%  \[
%    G^\ameq_s(x)=\G^{\hp_0,\hp_1}_{1,1}(x,s) .
%  \]}
% {\color{green} I like the second option better, it's also the one implemented}
% where
% $\hp_\s = (0, p_\s) \in \XS_K$ for $\s=0,1$,
% with $p_\s$ given by
% \[
%   ((p_\s)_i)_k = ((\hv_\s)_i)_{-k} \qquad\text{for }
%   |k| \leq K, 1\leq i \leq N.
% \]
%
%
With the choice~\eqref{e:K2choice} the corresponding Equation~\eqref{e:amplitudeequation} represents a linear approximation of
%\corrc Notice the (new) time derivative. I think this is what we should want to use. JB <<>>
\[
  \frac{1}{2\pi} \int_0^{2\pi} \sum_{i=1}^n  \bigl|\dot{\bar{u}}_i(t) \bigr|^2 dt =1  .
\]
% assuming the numerically determined end points $\hv_{\s}$, $s=0,1$, satisfy the amplitude constraint
% $
%   \sum_{k=-K}^K \sum_{i=1}^N k^2 |((\hv_s)_i)_k|^2  \approx 1.
% $
 
Finally, the continuation equation $G^\cont_s(x)=0$ is chosen as in Section~\ref{s:setup}, see~\eqref{e:continuationequation},
where  the 
``predictors'' $\dot{\hx}_\s$ are now of the tangent direction of the solution curve for the problem including both the phase and the amplitude condition.

Denoting by $\bar{F}$ the Fourier transform of the renormalized vector field~\eqref{e:deftildef},
the full set of equations becomes
\[
\bar{H}_s(x) = 
\begin{bmatrix} \bar{F}(x)\\\bar{G}_s(x)\end{bmatrix}
\qquad
\text{with}
\qquad
\bar{G}_s(x) =
\begin{bmatrix}  G^\phase(x) \\ G^\ameq(x) 
	\\ \bar{g}(\bar{\lambda}) \\ G^\cont_s(x) \end{bmatrix} .
\]
For $\bar{H}_s(x)=0$ we then carry out the extension construction of Section~\ref{s:saddlenode}, leading to the problem
$\bar{\bs{H}}_s(x)=0$. To the latter we apply the technique from Theorem~\ref{thm:radii_bound}
to obtain a parametrized solution curve 
$\sol{\bs{x}}(s)=(\sol{\bs{v}},\sol{\bar{\bs{\lambda}}})(s)$.
The first ``block'' $\sol{\bs{x}}^{[0]}=(\sol{\bs{v}}^{[0]},\sol{\bar{\bs{\lambda}}}^{[0]})$  
corresponds to  
$\sol{\bar{\bs{\lambda}}}^{[0]}(s)=\sol{\bar{\lambda}}(s)=(\sol{\tau}(s),\sol{a}(s),\sol{y}(s),\sol{\mu}(s))$ 
and the Fourier coefficients of
$\sol{\bar{u}}(t;s)$.
The second block $\sol{\bs{x}}^{[1]}$ contains the first derivatives
$(\sol{\tau}'(s),\sol{a}'(s),\sol{y}'(s),\sol{\mu}'(s))$ and the Fourier coefficients of $\partial_s \sol{\bar{u}}(t;s)$,
while $\sol{\bs{x}}^{[2]}$ contains their second derivatives.
% {\color{green} the check for the saddle node takes place just for the segment with $a=0$ and if Remark 3.5 is not satisfied.}
% \corrc Green is unreadable! Perhaps I am colorblind. JB <<>>

Before we can properly formulate the result, we need to analyze the problem at $a=0$. In particular, we aim to establish that $\sol{a}(s_0)=0$ implies $\sol{\mu}'(s_0)=0$.
Hence suppose $\sol{a}(s_0)=0$ for some $s_0 \in (0,1)$. 
We denote the Jacobian of the equilibrium problem by
\[
  \A_0 \bydef D_u f(\sol{\y}(s_0),\sol{\mu}(s_0)).
\]
Then, since $\bar{f}$ is linear in $\bar{u}$ at $a=0$,
\begin{equation}\label{e:Fislinear}
  \bigl(\bar{F} (\sol{x}(s_0))\bigr) _{k} =
  \bigl(\imag k I_n  - \sol{\tau}(s_0) \A_0 \bigr) \sol{v}_k(s_0) = 0,
  \qquad\text{for all } k\in \Z,
\end{equation}
with $\sol{v}_k(s_0) \in \C^n$ and $I_n$ the identity matrix on $\C^n$,
i.e., the ODE is diagonalized (in $k$) in Fourier space.
Since $\langle q^\ameq_v , \sol{v}(s_0) \rangle =1$, 
and $[q^\ameq_v]_0 =0$, there must be at least one $k_0 \in \Z\setminus\{0\}$
such that 
\[
  M_{k_0} \bydef \imag k_0 I_n  - \sol{\tau}(s_0) \A_0 
\]
is a non-invertible matrix. We note that this implies that $\sol{\tau}(s_0) \neq 0$. By conjugation symmetry $M_{-k_0}$ is then non-invertible as well ($\A_0=\A_0^*$ and $ \sol{\tau}(s_0) \in \R$).

%   \Bigl( -\imag k I_n
%   + \sol{\tau}(\sstar) D_u f(\sol{v}_0(\sstar),\sol{\mu}(\sstar)) \Bigr)
%    \tilde{v}_k ,
% \]
% i.e., diagonal in $k$, 
Furthermore, collecting the variables $z=(v,\tau,a,\y)$
and the equations
\[
  \hH (z,\mu) \bydef \begin{bmatrix} \bar{F}(z,\mu) \\ G^\phase(z) \\ G^\ameq(z) \\  f(\y,\mu)\end{bmatrix} = 0 ,
\]
its Jacobian can be decomposed as
\begin{equation}\label{e:defP}
  \PP \bydef D_z \hH (\sol{x}(s_0)) =
   \begin{bmatrix}
    D_v \bar{F}(\sol{x}(s_0)) & D_\tau \bar{F}(\sol{x}(s_0)) & D_a \bar{F}(\sol{x}(s_0))  & D_y \bar{F}(\sol{x}(s_0)) \\
    q^\phase & 0 & 0& 0 \\
	q^\ameq & 0 & 0& 0\\
	0&0&0& A_0 
   \end{bmatrix} .
\end{equation}
Here, again by linearity of $\bar{f}$ at $a=a(s_0)=0$,
\[
  \bigl(D_v \bar{F}(\sol{x}(s_0)) v\bigr)_k =
  \bigl(\bar{F} (\sol{x}(s_0))\bigr) _{k} =
  \bigl(\imag k I_n  - \sol{\tau}(s_0) \A_0 \bigr) v_k,
\]
which is similar to~\eqref{e:Fislinear} because of linearity,
and
\[
 (D_\tau \bar{F}(\sol{x}(s_0)))_k =  \A_0 \sol{v}_k(s_0),
\]
and 
\begin{equation}\label{e:DabarF}
 D_a \bar{F}(\sol{x}(s_0)) =  \tfrac{1}{2} D^2_{u,u}
  \tilde{f}(\sol{\y}(s_0),\sol{\mu}(s_0)) [\sol{v}(s_0),\sol{v}(s_0)]  ,
\end{equation}
which is to be interpreted in terms of convolution products,
and
\[
 (D_y \bar{F}(\sol{x}(s_0)))_k =  D^2_{u,u} \tilde{f}(\sol{\y}(s_0),\sol{\mu}(s_0)) \sol{v}_k(s_0),
\]
which is to be interpreted as an $n\times n$ matrix for each $k\in \Z$.

Since
\begin{equation}\label{e:blockDxHs0}
  D_x H_{s_0}(\sol{x}(s_0)) = \begin{bmatrix}
   \PP & D_\mu \hH(\sol{x}(s_0)) \\
   q^\cont_z(s_0) & q^\cont_\mu(s_0)
   \end{bmatrix}
\end{equation}
is invertible, as the solution curve was obtained through Theorem~\ref{thm:radii_bound}, the kernel of the operator $\PP$ can be at most $1$ dimensional. In turn this implies that the kernel of $D_v \bar{F}(\sol{x}(s_0))$
is at most $3$ dimensional. Indeed this follows from the expression~\eqref{e:defP} for $\PP$, and in particular the $0$ in the lower left corner, which implies that, apart from $D_v \bar{F}(\sol{x}(s_0))$, there are only two nonzero rows in the left (block) column. 

In view of $M_{\pm k_0}$ being non-invertible, the dimension of the kernel of $D_v \bar{F}(\sol{x}(s_0))$ is at least $2$-dimensional. If there would be  a $0 \neq k_1 \neq \pm k_0$ such that $M_{k_1}$ is non-invertible, then $M_{-k_1}$ would be non-invertible as well, implying that the kernel of $D_v \bar{F}(\sol{x}(s_0))$ is at least $4$ dimensional, a contradiction. Hence $M_{k}$ is invertible for all $k \notin \{0,\pm k_0\}$. 
The next remark explains why we may assume $M_0 =- \sol{\tau}(s_0) D_u f(\sol{\y}(s_0),\sol{\mu}(s_0))$ to be invertible.
\begin{remark}\label{r:A0invertible}
Suppose $M_0$ is not invertible and that $\tilde{v}_0 \neq 0$ is in the kernel of $M_0$. Since $\sol{\tau}(s_0) \neq 0$, the vector $\tilde{v}_0$ is in the kernel of $A_0$, hence $(0,0,0,v_0)$ is in the kernel of $\PP$.
Furthermore, since $[q^\ameq_v]_0 =0$ and $[q^\phase_v]_0=0$, it follows that $(\tilde{V}_0,0,0,0)$ is in the kernel of $\PP$,
where $(\tilde{V}_0)_k =0$ for $k \neq 0$ and $(\tilde{V}_0)_0=\tilde{v}_0$, so that $\tilde{V}_0$ is essentially just $\tilde{v}_0$ interpreted as an element of $(\ell^1_\nu)^n$. 
We infer that the kernel of $\PP$ is at least two dimensional, a contradiction. We conclude that $M_0$ and $A_0=D_u f(\sol{\y}(s_0),\sol{\mu}(s_0))$ are invertible matrices.
It then follows from  the implicit function
theorem that $\sol{\y}(s_0)$ is part of a smooth one parameter family of equilibria
$\sy(\mu)$ of~\eqref{e:initial} with $\sy(\sol{\mu}(s_0))=\sol{\y}(s_0)$.
\end{remark}
% \begin{remark}\label{r:A0invertible}
% Suppose $M_0$ is not invertible and that $\tilde{v}_0 \neq 0$ is in the kernel of $M_0$.
% Since $[q^\ameq_v]_0 =0$ and $[q^\phase_v]_0=0$, it follows that $\tilde{V}_0=(\tilde{v}_0,0,0,0)$ is in the kernel of $\PP$,
% where $\tilde{v}_0$ is now interpreted as an element of $(\ell^1_\nu)^n$.
% By the arguments in Remark~\ref{r:kernelimpliesvertical} we find that
% $\sol{x}'(s_0)$ is a multiple of $(\tilde{v}_0,0,0,0)$.
% Such a scenario can be excluded in several ways, for example by  checking that one of the other components of $\sol{x}'(s_0)$ does not vanish, or by choosing a continuation equation such that $[(q^\cont_s)_v]_0=0$. In particular, one could check that $\sol{a}'(s_0) \neq 0$ or choose to essentially perform parameter continuation \red{in $a$ by - notation unclear} setting $q^\cont_s=(0,0, (q^\cont_s)_a,0,0)$, so that $\langle q^\cont, x \rangle = (q^\cont_s)_a a$.
% In any case, from now on we will assume that such a check or choice has been made, so that we can conclude that $M_0$ and hence
% $\A_0=D_u f(\sol{\y}(s_0),\sol{\mu}(s_0))$ is invertible.
% It thus follows from  the implicit function
% theorem that $\sol{\y}(s_0)$ is part of a smooth one parameter family of equilibria
% $\sy(\mu)$ of~\eqref{e:initial} with $\sy(\sol{\mu}(s_0))=\sol{\y}(s_0)$.
% \end{remark}

Since $M_k$ is invertible for $k \neq \pm k_0$, while $M_{k_0}$ and $M_{-k_0}$
have one-dimensional kernels (related by conjugation), it follows
from~\eqref{e:Fislinear} that $\sol{v}_k(s_0)=0$ for $k \neq \pm k_0$, whereas 
$\sol{v}_{\pm k_0}(s_0) \neq 0$ in view of $\langle q^\ameq_v , \sol{v}(s_0) \rangle =1$. 
For definiteness we will from now on, without loss of generality, assume that $k_0=1$.  
This corresponds to the linearized problem at the equilibrium $\sol{\y}(s_0)$ having purely imaginary eigenvalues $\pm \imag \sol{\tau}(s_0)^{-1}$.
Note that if $k_0>1$ then we may simply replace $\tau$ by $\tau/k_0$ (and thus reduce to the minimal period).
By the arguments above, the kernel of $M_1$ and $M_{-1}$ is  one-dimensional.
% Furthermore, it follows from Remark~\ref{r:A0invertible} that $D_u
% f(\sol{\y}(s_0),\sol{\mu}(s_0))$ is invertible, hence by the implicit function
% theorem $\sol{\y}(s_0)$ is part of a smooth one parameter family of equilibria
% $\sy(\mu)$ of~\eqref{e:initial} with $\sy(\sol{\mu}(s_0))=\sol{\y}(s_0)$.

In the following we will construct an element in the kernel of $\PP$. This implies, using again the arguments in  Remark~\ref{r:kernelimpliesvertical}, that $\sol{\mu}'(s_0)=0$. 
%
% The next observation is somewhat similar to the previous remark, and is analogous to Remark~\ref{r:notphaseshift}.
% \begin{remark}\label{r:notshiftamplitude}
% We will assume that we have properly set up the problem in such a way that the time-shift as well as the invariance under rescaling $\sol{v}  \to \theta \sol{v}$ and $\sol{a} \to \theta^{-1} \sol{a}$ do not cause degeneracy. To be precise, consider the $2\times 2$ matrix
We define the $2\times 2$ matrix
\begin{equation}\label{e:matrixC}
   C \bydef \begin{bmatrix} 
	\langle [q^\phase_v]_{-1} , \sol{v}_{-1}(s_0) \rangle &
	\langle [q^\phase_v]_{1} , \sol{v}_{1}(s_0) \rangle \\
	\langle [q^\ameq_v]_{-1} , \sol{v}_{-1}(s_0) \rangle &
	\langle [q^\ameq_v]_{1} , \sol{v}_{1}(s_0) \rangle 
	 \end{bmatrix} .
\end{equation}
There are two cases to consider: $C$ is invertible or not. We start with the latter case.

If $C$ is non-invertible, say $0 \neq [c_-,c_+]^T$ is in its kernel,
then it is easily seen that $(\tilde{v},0,0,0)$  is in the kernel of $\PP$,
where $\tilde{v}_{k}=0$ for $k\neq \pm 1$, and
$\tilde{v}_{\pm 1} = c_{\pm} \sol{v}_{\pm 1}(s_0)$.
Hence by the arguments in Remark~\ref{r:kernelimpliesvertical}
$\sol{x}'(s_0)$ is a multiple of $(\tilde{v},0,0,0,0)$, and $\sol{\mu}'(s_0)=0$.
As a side remark, this direction can be interpreted as a combination of time-shift and rescaling $a$. 
If desired, this scenario can easily be avoided by choosing 
 $q^\phase$ and $q^\ameq$ appropriately or, alternatively,
 % is such that
% (q^\phase_v)_{\pm k_0}  \sol{v}_{\pm k_0}(s_0) \approx \red{???}$ and
% (q^\ameq_v)_{\pm k_0}  \sol{v}_{\pm k_0}(s_0) \approx \red{???}$,
% o that \red{what????}. similarly to \red{what???}, one may implement a check \red{what???}  to prove that $C$ is invertible. Alternatively,
by checking that $\sol{a}'(s_0) \neq 0$, cf.~Remark~\ref{r:A0invertible}.

% \red{JB: While it does provide insight, it is not entirely clear to me why this would be important; if $C$ is non-invertible we nevertheless find $\sol{\mu'(s_0)}=0$, which is really the only thing we need, since it already implies $\sol{a}(s)\neq 0$ for other $s$ by the the parabolicity of $\sol{\mu}(s)$. Indeed in all three scenarios (including Remark~\ref{r:A0invertible}) we have $\sol{\mu}'(s_0)=0$ and parabolicity does the rest. We do need $D_u f(\sol{\y}(s_0),\sol{\mu}(s_0))$ invertible. But $\sol{a}'(s_0) \neq 0$ is somehow just for our mental picture.}

If $C$ is invertible, then we construct an element of the form $(\tilde{v},0,1,0)$ in the kernel of $\PP$. This implies, using again the arguments in  Remark~\ref{r:kernelimpliesvertical}, that $\sol{x}'(s_0)$ is a multiple of $(\tilde{v}_0,1,0,0)$ and $\sol{\mu}'(s_0)=0$, in particular. 
%\begin{remark}\label{r:constructtangent}
Since $M_k$ is invertible for $k \neq \pm 1$, and $M_1$ and $M_{-1}$ have one-dimensional kernels (related by conjugation), it follows from~\eqref{e:Fislinear} that $\sol{v}_k(s_0)=0$ for $k \neq \pm 1$ and
$\sol{v}_{\pm 1}(s_0) \neq 0$. We infer from~\eqref{e:DabarF} and the properties of the convolution product that
$D_a \bar{F}_k(\sol{x}(s_0))$ vanishes for all $k \notin \{-2,0,2\}$.
We set 
\[
  \tilde{v}_k  =  - M_k^{-1} D_a \bar{F}_k(\sol{x}(s_0))
  \qquad\text{for } k \in \{-2,0,2\},
\]
and $\tilde{v}_k =0$ for $k \notin \{-2,-1,0,1,2\}$.
Next we set $\tilde{v}_{\pm 1}(s_0) = c_{\pm}\sol{v}_{\pm 1}(s_0) $,
where 
%$c_\pm$ are chosen such that
\begin{equation*}
 \begin{bmatrix} c_- \\ c_+ \end{bmatrix} =
	- C^{-1}
  \begin{bmatrix}  
	\langle [q^\phase_v]_{-2} , \tilde{v}_{-2}\rangle 
  + \langle [q^\phase_v]_{0} , \tilde{v}_{0}\rangle
  + \langle [q^\phase_v]_{2} , \tilde{v}_{2}\rangle  \\
    \langle [q^\ameq_v]_{-2} , \tilde{v}_{-2}\rangle 
  + \langle [q^\ameq_v]_{0} , \tilde{v}_{0}\rangle
  + \langle [q^\ameq_v]_{2} , \tilde{v}_{2}\rangle
  \end{bmatrix} ,
\end{equation*}
with $C$ defined in~\eqref{e:matrixC}.
It is not difficult to check that, by construction, $(\tilde{v},0,1,0)$
is in the kernel of $\PP$.
%\end{remark}

We are now ready to state a result for the rigorous verification of Hopf bifurcations.
% \corrc Shall we call this a proposition? JB <<>>
\begin{proposition}\label{prop:hopf}
Assume
\begin{subequations}
\label{e:signcondhopf}
\begin{gather}
  \big(\hat{\bs{\mu}}^{[1]}_0\bigr)_j + \hr < 0 , \quad\text{and }\quad
  \bigl(\hat{\bs{\mu}}^{[1]}_1\bigr)_j - \hr > 0 , \label{e:muendpoints}\\
  \min\Bigl\{\bigl(\hat{\bs{\mu}}^{[2]}_0\bigr)_j,
	  		 \bigl(\hat{\bs{\mu}}^{[2]}_1\bigr)_j\Bigr\} -  \hr > 0 ,
			 \label{e:musecondpositive} \\
  \big(\hat{\bs{a}}^{[0]}_0\bigr)_j + \hr < 0 ,   \quad\text{and }\quad
  \bigl(\hat{\bs{a}}^{[0]}_1\bigr)_j - \hr > 0 . \label{e:aendpoints}
%   \min\Bigl\{\bigl(\hat{\bs{a}}^{[1]}_0\bigr)_j,
% 	  		 \bigl(\hat{\bs{a}}^{[1]}_1\bigr)_j\Bigr\} -  \hr > 0 .
% 			 \label{e:afirstpositive}
\end{gather}
\end{subequations}
Then the solutions curve $\sol{\bs{x}}(s)$ goes through a unique nondegenerate Hopf bifurcation (folding to the right) with respect to $\mu$ in the interval $s\in [0,1]$.
\end{proposition}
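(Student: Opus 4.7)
The plan is to combine the fold-detection criterion of Proposition~\ref{prop:fold}, applied to the bifurcation parameter $\mu$ inside the extended desingularized system $\bar{\bs{H}}_s$, with the algebraic kernel analysis for $\PP$ carried out in the paragraphs preceding the statement. Both ingredients are already available; the task is to assemble them and check that the various uniqueness claims are consistent.

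First, I would observe that conditions \eqref{e:muendpoints} and \eqref{e:musecondpositive} are exactly the hypotheses of Proposition~\ref{prop:fold} with $\lambda_j$ specialized to the $\mu$-component of $\bar{\lambda}$. Applying it yields a unique $\sstar \in (0,1)$ such that $\sol{\mu}'(\sstar) = 0$ and $\sol{\mu}''(\sstar) > 0$, while $\sol{\mu}'(s) \neq 0$ for every $s \in [0,1] \setminus \{\sstar\}$. This already secures the nondegeneracy relations \eqref{e:nondeghopf} and the ``folding to the right'' direction.

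Next, using condition \eqref{e:aendpoints} together with the analog of~\eqref{e:hrclose} for the $a$-component of $\bar{\lambda}$, I would deduce $\sol{a}(0) < 0 < \sol{a}(1)$, so the intermediate value theorem produces at least one $s_0 \in (0,1)$ with $\sol{a}(s_0) = 0$. At any such $s_0$ the kernel analysis preceding the proposition applies verbatim: after the reduction to $k_0 = 1$ and the observation of Remark~\ref{r:A0invertible} that $M_0$ is invertible, either the $2\times 2$ matrix $C$ of~\eqref{e:matrixC} is singular and a kernel element $(\tilde{v},0,0,0) \in \ker \PP$ is produced directly, or $C$ is invertible and the explicit construction yields a kernel element of the form $(\tilde{v},0,1,0)$. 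In both cases, applying the reasoning of Remark~\ref{r:kernelimpliesvertical} to the Jacobian \eqref{e:blockDxHs0} forces $\sol{\mu}'(s_0) = 0$.

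Combining the two steps: every zero of $\sol{a}$ in $(0,1)$ is also a zero of $\sol{\mu}'$, but by uniqueness of $\sstar$ there is exactly one such zero, so $s_0 = \sstar$ is the unique vanishing point of $\sol{a}$. Setting $\mustar \bydef \sol{\mu}(\sstar)$, the rescaling $u(t;s) = \sy(\sol{\mu}(s)) + \sol{a}(s) \bar{u}(t;s)$ gives $\sol{v}(\sstar) = \sy(\mustar)$, while for $s \neq \sstar$ the factor $\sol{a}(s) \neq 0$ combined with the amplitude normalization $\langle q^\ameq , \sol{x}(s)\rangle = 1$ (which guarantees $\bar{u}(\cdot;s) \not\equiv 0$) shows the orbit is genuinely nonstationary. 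Together with the first step this verifies all items of Definition~\ref{defn:Hopf}. The main obstacle is making the implication ``$\sol{a}(s_0) = 0 \Longrightarrow \sol{\mu}'(s_0) = 0$'' fully rigorous at every candidate $s_0$; this relies on the delicate kernel analysis of $\PP$ preceding the proposition, and the present argument has to invoke it uniformly rather than only at $\sstar$. Once that implication is secured, the uniqueness of $s_0$ and all nondegeneracy conditions follow immediately from the fold proposition applied to $\mu$.
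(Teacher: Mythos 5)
Your proposal is correct and follows essentially the same route as the paper's proof: apply Proposition~\ref{prop:fold} with $\lambda_j=\mu$ to get the unique nondegenerate fold at $\sstar$, use \eqref{e:aendpoints} and the intermediate value theorem to find a zero $s_0$ of $\sol{a}$, invoke the kernel analysis of $\PP$ (valid at every such $s_0$, exactly as you insist) to conclude $\sol{\mu}'(s_0)=0$ and hence $s_0=\sstar$, and use the amplitude condition to rule out stationary solutions for $s\neq\sstar$. The only step you leave implicit is the paper's final appeal to Remark~\ref{r:A0invertible} and the implicit function theorem to identify $\sol{\y}(\sstar)$ as a point on a smooth one-parameter family of equilibria $\sy(\mu)$, which is needed to match Definition~\ref{defn:Hopf} literally, but this follows immediately from the invertibility of $A_0$ that you already established in the kernel analysis.
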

\begin{proof}
The proof follows the same lines as the one of Proposition~\ref{prop:fold},
and we comment only on the additional steps.
Assumptions~\eqref{e:muendpoints} and~\eqref{e:musecondpositive}
imply that there is a unique $\sstar \in (0,1)$ such that $\sol{\mu}'(\sstar)=0$. 
Assumption~\eqref{e:aendpoints}
implies that there is a $s_0 \in (0,1)$ such that $\sol{a}(s_0)=0$. 
The analysis of the kernel of the operator $\PP$ above shows that 
$\sol{\mu}'(s_0)=0$.
Hence the unique nondegenerate fold with respect $\mu$ occurs at $\sstar=s_0$.

For $s \neq \sstar$ we have $\sol{a}(s) \neq 0$,
since the analysis above shows that $\sol{a}(s)=0$ 
 implies $\sol{\mu}'(s)=0$, which occurs at $s=\sstar$ only.
%(it also this follows from~\eqref{e:afirstpositive} of course).
The amplitude condition~\eqref{e:amplitudeequation} then guarantees that for $s\neq\sstar$ the solution $\sol{u}(t;s)$, represented in Fourier space by $\sol{\y}(s)+\sol{a}(s) \sol{v}(s)$, is not time-independent.
% \corrc but $\sol{a}'(s)\neq 0$ at all times? Elena. I made a typo; anyway, the proof has been simplified, due to having an improved Remark~\ref{r:A0invertible}. JB <<>>

Finally, it follows from Remark~\ref{r:A0invertible}
that $D_u f(\sol{\y}(\sstar),\sol{\mu}(\sstar))$
is invertible, hence by the implicit function theorem $\sol{\y}(\sstar)$ is
part of a smooth one parameter family of equilibria $\sy(\mu)$ with
$\sy(\sol{\mu}(\sstar))=\sol{\y}(\sstar)$.
% Finally, it follows from Remark~\ref{r:A0invertible}
% and~\eqref{e:afirstpositive} that $D_u f(\sol{\y}(\sstar),\sol{\mu}(\sstar))$
% is invertible, hence by the implicit function theorem $\sol{\y}(\sstar)$ is
% part of a smooth one parameter family of equilibria $\sy(\mu)$ with
% $\sy(\sol{\mu}(\sstar))=\sol{\y}(\sstar)$.
\end{proof}

\begin{remark}
The conclusion of Proposition~\ref{prop:hopf} also holds when
the inequalities in the assumptions~\eqref{e:aendpoints}
%--\eqref{e:afirstpositive} 
are reversed. Similarly, when the three inequalities in~\eqref{e:muendpoints}--\eqref{e:musecondpositive} are reversed
then the result holds with the curve folding to the left.  
\end{remark}

% \begin{remark}
% Assumption~\eqref{e:afirstpositive} can be weakened, since it is only used in the proof to exclude the degenerate scenario where $D_u f(\sol{\y}(\sstar),\sol{\mu}(\sstar))$ would be non-invertible,
% which can also be excluded via alternative checks, see
%  Remark~\ref{r:A0invertible},
% or through a direct interval arithmetic computation.
% \end{remark}

Traditionally, a Hopf bifurcation point (rather than the branch of periodic solutions emanating from it) is described in terms conditions on eigenvalues of the Jacobian at the critical point as well as other normal form parameters.
Clearly, finding just the Hopf bifurcation point only requires solving the algebraic system (where we have split complex eigenvectors in real and imaginary parts, and $\varphi_1 , \varphi_2 \in \R^n$)
\[
  \begin{bmatrix}  
	 f(\y,\mu) \\
	 D_u f(\y,\mu) \y_1 + \beta \y_2 \\
	 D_u f(\y,\mu) \y_2 - \beta \y_2 \\
	 \varphi_1^T \y_1 - \varphi_2^T \y_2 - 1\\
	 \varphi_2^T \y_1 + \varphi_1^T \y_2 
  \end{bmatrix}
  =0,  \qquad \text{with } (\y,\y_1,\y_2,\mu,\beta) \in \R^{3n+2}, 
\]
% \corrc Is this a reasonable choice for the ``phase'' conditions for the eigenvector? JB <<>>
% \blue{it is the one used on the algebraic code and in practise it works most of the times. In the code $\phi_i$ are set random until a pair is found to work, then that pair is (manually) stored and used the following times. Some systems take some tries before working but up to now it always worked out.}
which can also be attacked using the radii polynomial approach, albeit in the much simpler finite dimensional setting.
Obviously, this is how one may locate computationally a Hopf bifurcation point
and use it as a numerical starting point for a rigorous continuation of the desingularized problem for the periodic orbits.
Additionally, if desired, one may study the eigenvalue problem of the equilibrium ($\varphi_0 \in \C^n$)
\[
   \LL_\mu (\y,\y_0,\gamma)
   \bydef
  \begin{bmatrix}  
	 f(\y,\mu) \\
	 D_u f(\y,\mu) \y_0 -\gamma \y_0 \\
	  \varphi_0^T  \y_0  - 1
  \end{bmatrix}
  =0,  \qquad \text{with } (\y,\y_0,\gamma) \in \C^{2n+1}, \mu \in \R,
\] 
for example to determine the algebraic multiplicity of the purely imaginary eigenvalues 
$\imag\sol{\tau}(\sstar)^{-1}$ and to confirm that 
$\A_0 = D_u f(\sy(\mu_{\sstar}),\mu_{\sstar})$ 
has no other purely imaginary eigenvalues apart from its complex conjugate (the analysis in Fourier space excludes only integer multiples of 
$\pm \imag\;\!\sol{\tau}(\sstar)^{-1}$). Furthermore, when the algebraic multiplicity of $ \imag\;\!\sol{\tau}(\sstar)^{-1}$ is one (as it generically will be) then one may establish the direction in which this eigenvalue moves when $\mu$  is varied through $\mustar=\sol{\mu}(\sstar)$, by solving the linear system
\[
  \begin{bmatrix}  
  \A_0 & 0 & 0 \\
  D^2_{u,u} f(\sy(\mustar),\mustar) \sy_0(\mustar) &
  \A_0 - \stat{\gamma}(\mustar) 
  & -\sy_0(\mustar) \\
  0 & \varphi_0 & 0 \\
  \end{bmatrix} 
  \begin{bmatrix}  
  	\sy'(\mustar) \\ \sy_0'(\mustar) \\ \stat{\gamma}'(\mustar) 
  \end{bmatrix} 
	= 
  \begin{bmatrix}  
  	D_\mu f(\sy(\mustar),\mustar) \\
    D^2_{u,\mu} f(\sy(\mustar),\mustar)      
	                              \sy_0(\mustar) \\
	0
  \end{bmatrix} 
  .
\] 
Here the matrix in the lefthand side is invertible when $\A_0$ is invertible and the algebraic multiplicity of 
$\stat{\gamma}(\mustar) = \imag\sol{\tau}(\sstar)^{-1}$ is $1$.
All these computations are on finite dimensional algebraic systems, and can relatively easily be done in interval arithmetic to ensure that the results are mathematically rigorous.

%\section{Refinement in saddle segment}
%\label{s:refinement}
%\input{refinement}

\section{From Hopf bifurcation to global continuation}
\label{s:gluing}
%!TEX root = bifcont.tex

% gluing of the Hopf system to a more general continuation system

In the previous sections, we presented an approach to validate Hopf bifurcations and a local family of periodic orbits.
In the desingularized Hopf problem \eqref{e:mainhopf} we solved for $\bar{u}$ and $\bar{\lambda} = (\tau, a, y, \mu)$ instead of  $u$ and $\lambda=(\tau,\mu)$.
At some distance from the Hopf bifurcation point one would prefer to start working directly with the simpler systems \eqref{e:main}. 
%
%  that originate from them. In this section, we discuss how to step away from the desingularized Hopf system and get back to the general system
%
% In particular, in order to validate the Hopf bifurcation, instead of solving for $u$ and $\lambda$, we had to solve for .
Here we discuss how to switch from a solution branch for $(\bar u, \bar \lambda)=(\bar{u},(\tau, a, y, \mu))$ to a solution branch for $(u,\lambda)=(u,(\tau,\mu))$. In particular, when we ``glue'' the end (periodic orbit) point of a continuation step for the desingularized system to the starting (periodic orbit) point of a continuation step for the original system, we want to be sure that the solutions branches connect.
We denote the numerical approximations at the boundary points by
$(\hat{\bar{u}}_1,(\hat{\tau}_1, \hat{a}_1, \hat{y}_1, \hat{\mu}_1))$
and $(\hat{u}_0,(\hat{\tau}_0, \hat{\mu}_0))$, respectively,
and for natural reasons we choose to set 
\begin{equation}\label{e:gluecenters}
	\hat{\tau}_0=\hat{\tau}_1  \quad\text{and}\quad \hat{\mu}_0=\hat{\mu}_1
\quad\text{and}\quad  \hat{u}_0 = \hat{y}_1 + \hat{a}_1 \hat{\bar{u}}_1 .
\end{equation}
The solutions found at the boundary points are denoted by
$(\sol{\bar{u}}_1,(\sol{\tau}_1, \sol{a}_1, \sol{y}_1, \sol{\mu}_1))$
and
$(\sol{u}_0,(\sol{\tau}_0, \sol{\mu}_0))$. For the solutions curves to glue nicely we need to check that 
\begin{equation}\label{e:gluesuccess}
\sol{\tau}_0=\sol{\tau}_1 \quad\text{and}\quad \sol{\mu}_0=\sol{\mu}_1
\quad\text{and}\quad \sol{u}_0 = \sol{y}_1 + \sol{a}_1 \sol{\bar{u}}_1 .
\end{equation}
The main technical issue lies in lining up the phase condition and continuation equation at the boundary points. There is considerable freedom in choosing these equations, and we will make use of that.
In what follows we will switch from a function $u$ to its Fourier components $v$ without further ado.

We note that the coordinate transformation
\begin{equation}\label{e:trans}
u = y + a \bar{u}
\end{equation}
is essentially a nonlinear change of variables, since $a$ is part of the set of unknowns. On the other hand, in terms of Fourier coefficients the transformation is relatively simple: all modes get rescaled by the same scalar $a$ and only in the $0$-th mode the vector $y$ is added.
Let us denote the phase condition at the starting point of the continuation step in the original problem by
\[
\langle q^\phase_{0}, x \rangle = 0, \qquad\text{where } q^\phase_{0}=(q^\phase_{0v},0). 
\]
Here we assume the $0$-th Fourier component of $q^\phase_{0v}$ to vanish, see also~\eqref{e:choiceofphaseq}.  
In view of the action of the transformation~\eqref{e:trans} on the Fourier coefficients, as discussed above, it transforms~\eqref{e:gluecont0} into an \emph{equivalent} condition in desingularized coordinates of the form
\begin{equation}\label{e:gluephase}
  \langle \bar{q}^\phase_{1} , \bar{x} \rangle =0,
\qquad\text{where } \bar{q}^\phase_{1}=(\bar{q}^\phase_{1v},0)
\quad\text{with } \bar{q}^\phase_{1v}=\hat{a}_1 q^\phase_{0v},
\end{equation}
provided $\hat{a}_1 \neq 0$. Other rescalings work as well; this particular one is inspired by~\eqref{e:choiceofphaseq}. 
Hence we choose~\eqref{e:gluephase} as the phase condition at the end of the continuation step in the desingularized problem, i.e.~$\bar{q}^\phase_{1v}=\hat{a}_1 q^\phase_{0v}$.
In essence, this guarantees that the phase of the solution does not shift at the transition. 
% We remark that, equivalently, the argument can also be made in the opposite
% direction, i.e., going from the desingularized to the original problem.

Next, we follow a similar reasoning for the continuation equation.
Let us denote the continuation equation at the starting point of the continuation step in the original problem by
\begin{equation}\label{e:gluecont0}
\langle q^\cont_0, x  \rangle = \langle q^\cont_0, \hat{x}_0  \rangle .
\end{equation}
In general the transformed condition in desingularized coordinates is
\emph{not} affine linear. In principle, this is not a problem for the
continuation method. The restriction to affine linear conditions was only made
for simplicity of presentation in the current paper. Preferring to stay in this
affine linear context for consistency, we simply require
$q^\cont_0=(q^\cont_{0v},q^\cont_{0\lambda})$ to have nonvanishing
$\lambda$-components only, i.e.\
$q^\cont_0=(0,q^\cont_{0\lambda})=(0,(q^\cont_{0\tau},q^\cont_{0\mu}))$. This
essentially corresponds to parameter continuation rather than 
pseudo-arclength continuation at this gluing step.
For this choice, the corresponding equivalent condition for the desingularized problem is
\begin{equation}\label{e:gluecont}
\langle \bar{q}^\cont_1, \bar{x}
  \rangle = \langle \bar{q}^\cont_1, \hat{\bar{x}}_0  \rangle  ,
  \qquad\text{where }\bar{q}^\cont_1=(0,(q^\cont_{0\tau},0,0,q^\cont_{0\mu})).
\end{equation}
Assuming parameter continuation at the transition point requires us to ``force'' the continuation code to switch away from the preferable pseudo-arclength continuation in the neighborhood of the gluing point. Nevertheless, this can be implemented in a relatively straightforward manner. 

With the choices~\eqref{e:gluephase} and~\eqref{e:gluecont} for the phase and continuation equations, it is not difficult to establish that
$
  ( \sol{y}_1 + \sol{a}_1 \sol{\bar{u}}_1 , (\sol{\tau}_1,\sol{\mu}_1))
$
is a solution to the problem at the starting point of the continuation step for the original system. It remains to establish that it is the same solution as
$(\sol{u}_0,(\sol{\tau}_0,\sol{\mu}_0))$.
For this final step we use the uniqueness result in Theorem~\ref{thm:radii_bound}.
Let the balls used to prove the end and starting points be denoted by $B_1=B_{\hr_1}(\hat{\bar{v}}_1,\hat{\bar{\lambda}}_1)$
and $B_0=B_{\hr_0}(\hat{v}_0,\hat{\lambda}_0)$, respectively.
If the transformed ball
\[
  B_{1 \to 0} \bydef \{ (y+av ,(\tau,\mu)) : (v,(\tau,a,y,\mu)) \in B_1 \}
\] 
and the ball $B_0$ are nested, then by uniqueness of the solutions in $B_1$ and $B_0$ we conclude that indeed \eqref{e:gluesuccess}~hold, see also~\cite{breden-vanicat,BLM} for similar arguments. Since the centers of the balls are equivalent in view of~\eqref{e:gluecenters},
to guarantee the inclusion $B_{1 \to 0} \subset B_0$
it suffices to check that 
%\red{Something like this;  didn't check really.}
\begin{equation}\label{e:r0r1}
   \Bigl(1 + |\hat{a}_1| + \max_{1\leq j \leq n} \|(\hat{\bar{v}}_1)_j\|_{\nu} +  \hr_1 \Bigr) \hr_1 <  \hr_0.
\end{equation}
Indeed, let 
\[
(v,(\tau,a,y,\mu)) =
 (\hat{\bar{v}}_1,(\hat{\tau}_1,\hat{a}_1,\hat{y}_1,\hat{\mu}_1))
 + \hr_1 (\element_v,(\element_\tau,\element_a,\element_y,\element_\mu))
\]
with
\begin{equation}\label{e:unitball}
\max_{1\leq j \leq n} \|(\element_v)_j\|_\nu \leq 1, 	
\quad |\element_\tau| \leq 1, 
\quad |\element_a| \leq 1, 
\quad \max_{1\leq j \leq n} |(\element_y)_j| \leq 1, 
\quad |\element_\mu| \leq 1, 
\end{equation}
represent any element in $B_1$.
Then in view of~\eqref{e:gluecenters} we have
\[ 
(y+av ,(\tau,\mu)) - (\hat{v}_0,(\hat{\tau}_0, \hat{\mu}_0))
= (\hr_1 \element_y+  \hr_1 \hat{a}_1 \element_v 
+  \hr_1  \element_a \hat{\bar{v}}_1 + \hr_1^2 \element_a \element_v , 
(\hr_1 \element_\tau, \hr_1 \element_\mu)).
\]
The bounds~\eqref{e:unitball} imply that to check that $B_{1 \to 0} \subset B_0$
we require $\hr_1 \leq \hr_0$, which follows from~\eqref{e:r0r1}, as well as, for any $j=1,\dots,n$,
\begin{alignat*}{1}
\bigl\| \hr_1 (\element_y)_j+  \hr_1 \hat{a}_1 (\element_v)_j 
+  \hr_1  \element_a (\hat{\bar{v}}_1)_j + \hr_1^2 \element_a (\element_v)_j 
\bigr\|_\nu  \hspace*{-3cm}  \\
&\leq
\hr_1 \| (\element_y)_j \|_\nu +  \hr_1 |\hat{a}_1| \| (\element_v)_j \|_\nu
+  \hr_1  |\element_a| \| (\hat{\bar{v}}_1)_j \|_\nu + \hr_1^2 |\element_a| \| (\element_v)_j \|_\nu 
\\ &\leq
\hr_1  +  \hr_1 |\hat{a}_1| 
+  \hr_1  \| (\hat{\bar{v}}_1)_j \|_\nu + \hr_1^2 \leq \hr_0,
\end{alignat*}
where the final inequality is guaranteed by condition~\eqref{e:r0r1}.

Finally, it is clear from~\eqref{e:r0r1} that the inclusion $B_{1 \to 0} \subset B_0$, and hence continuity of the solution branch at the switching point, is more easily established when the radius $\hr_1$ used to validate the end point of the continuation in desingularized variables is taken as small as possible, while the radius $\hr_0$ used to validate the starting point of the continuation in the original variables is taken as large as possible. 

\section{Examples}
\label{s:examples}
%!TEX root = bifcont.tex

%\corrc Elena: all figures where solutions are plotted versus time still need to be redone, since the time scale was wrong, due to the $\omega$ versus $\tau$ mixup. 
%The original time variable $t$ should run from $0$ to $2 \pi \tau$. JB <<>>

%\corrc Elena: to be consistent and scientifically sound, I propose to make sure all "numerical intervals" are of the form
%$q \in 1.234786 + [-4.5,4.5] \cdot 10^{-5}$, where the float $1.234786$ has six 
%digits after the decimal seperator, corresponding nicely to the double-digit accuracy of the interval with $-5$ as the power. Due to rounding this may (in some cases, but we could just do all cases to stay on the safe side) require adding 1 to the decimal in the interval (4.6 instead of 4.5 in the example). JB <<>>
%\corrc  checked all decimal bounds, they are coherent now. E<<>>
%\corrc Yes, it looks much better; still one decimal too many in all the mid points, I think, but just leave it for now. JB <<>>
%\corrc For readibility, for now I have removed all red associated to the two issues raised in the previous remarks. JB <<>>

We present some examples that can be analyzed with the material presented in this paper, which provides a robust and flexible method for identifying and validating fold and Hopf bifurcations in systems of ODEs.
The first example is the saddle-node validation in the so-called Rychkov system. 
This is followed by three examples of Hopf bifurcations, including a Hamiltonian  problem in Section~\ref{s:hamiltonianexample}.

%%  We are interested in showing the stability and flexibility of this method
%% in identifying and validating saddles as well as Hopf bifurcations.
%%
%% We show various examples of applications. 
%The second is the validation of the Hopf normal form. Clearly, this is a a very well known system, but it is presented here just as a test case for our search and validation of Hopf bifurcations.

% The third example is a 4 dimensional hypercahotic system derived from the Lorenz-84 model and depends on 6 paramters. In the paper \cite{KutznetsovMejier}, the existence of a Hopf bifurcation branch in a 2 dimensional parameter plane is proven.
% Here we apply our method to validate the Hopf bifurcation while fixing 5 paramenters with the same values as in \cite{KutznetsovMejier}.

\subsection{The fold in the Rychkov system}
\label{s:rychkovexample}

% \begin{figure}
% \begin{center}
% \includegraphics[scale=0.4]{rychkov_solutions-eps-converted-to.pdf}
% \caption{First validated orbit (in blue), last validated orbit (in red) and orbit corresponding to the biggest parameter value $\hat\lambda^*$ (in green) in the Rychkov system, \eqref{rychkov}.}\label{f:orbitRychkov}
% \end{center}
% \end{figure}

The Rychkov system, first presented in \cite{rychkov}, is given by
\begin{equation}\label{rychkov} 
\begin{cases}
\dot u_1 = u_2 - u_1^5 +u_1^3+\mu u_1\\
\dot u_2 = -u_1.
\end{cases}
\end{equation}
It was proven in~\cite{rychkovnew} to have no periodic solutions for $\mu>0.2249654$ and two periodic solutions for $\mu<0.224$. Additionally, it was shown in \cite{rychkovnew} that the curve of periodic orbits undergoes a saddle-node bifurcation for some $\mu$ in the interval $[0.224,0.2249654]$. 
Here we locate the bifurcation point more precisely, and we prove that a \emph{nondegenerate} fold occurs.

\begin{figure}
\begin{center}
\centering
\includegraphics[width=0.6\textwidth]{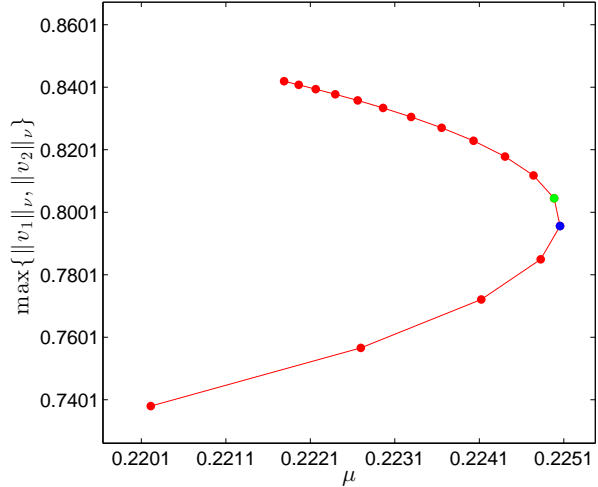}
\centering
\caption{Validated continuation of the branch of periodic solutions in the Rychkov system~\eqref{rychkov}.  The fold occurs at $%\mustar \in 0.224960422 + [-1.7,1.7] \cdot 10^{-5}$. 
\mustar \in 0.224955+ [-6.2,6.2]\cdot 10^{-5}$.
The saddle has been validated between the blue and the green dot, but this precision was possible thanks to refinement.
% The parameter $\mu$ is on the horizontal axe, the norm of the periodic solution on the vertical axe.
%\red{What is the information in the figure on the right? For me, it just shows circles. JB}
%In the second Figure we can see the two orbits associated with $\mu = 0.2245$ in blue and green and the orbit at the saddle-node bifurcation in red, for $\mu= \mu^* = 0.22496 \pm 1.7\cdot 10^{-5}$.
%\red{Elena:  What is red, what is blue? What is on the axes? JB <<>>}
}
\label{f:normRychkov}
\end{center}
\end{figure} 

%\corrc ELena: then the diagram in Figure~\ref{f:normRychkov} should also start at $\mu=0.22$ I would say. JB <<>>
%\corrc Can we also have $\max\{\|v_1\|_\nu, \|v_2\|_\nu\}$ as the $y$-label in the figure? JB <<>>
%\corrc Elena: which one? JB <<>>

We start our validated continuation at $\mu = 0.2245$
and continue the periodic solution for increasing~$\mu$. 
In Figure \ref{f:normRychkov} we plot the norm $\max\{\|v_1\|_\nu, \|v_2\|_\nu\}$
of the Fourier series of the periodic orbit versus the parameter $\mu$. 
During the continuation, we test numerically for the existence of a saddle-node, and
when a numerical indication is found for the occurence of a fold, then we validate the numerically found saddle-node using Proposition~\ref{prop:fold}.
We find that a nondegenerate fold is located at
$$
\mustar \in 0.224955+ [-6.2,6.2]\cdot 10^{-5}.
$$
For the validation of the fold, we set $\nu = 1.01$ and the code (heuristically) selects the dimension of finite dimensional projection to be $K=44$.
%This system is interesting from a computational point of view because the segment that includes the fold in Figure \ref{f:normRychkov} also includes a point where $\mu''(s)$ is zero. For this reason, a further refinement of the validation segment was necessary to verify all the conditions~\eqref{e:signcondsadnod}.

%\corrc I still don't know whether the code checks that no other saddle-nodes (w.r.t. $\mu$) occurs along the computed and validated branch. JB \blue{the code can check that, I usually don't because it's ETREMELY slow, but if I remember correctly for this validation it has been done. E} Let's discuss. JB
% \blue{update: the code does check for absence of saddle constantly, some time ago I optimised it and forgot about it. I'm rerunning the code just to be sure it returns the same figure, but it's decently fast $\approx$ some half hour with check for absence of saddle (that is now mandatory) E}
%<<>>
%
%\corrc update figure 2: more symmetric, update text with it, runs in one hour + <<>>

%This system is interesting because the second derivative of $\mu(s)$ changes sign relatively close to the saddle-node. Therefore, we had to compute and validate middle steps between to continuation solutions in order to prove that $\mu''(s)$ was not changing sign at the saddle-node.

The corresponding \textsc{matlab} code, available at~\cite{codehopf}, is provided in
\verb+figure_rychkov_saddle.m+.
%\corrc
%Elena: I think as additional information you should give the values of $K$ and $\nu$ used. Also: can you point the reader to the specific m-file that they can run to redo this proof? JB <<>>

% {\color{green}For the validation of the sadlle-node in the Rychkov system,
% validating directly the validated interval provided by the continuation
% procedure was not viable, since the validation bounds were too big to confirm
% the sadlle-node conditions as presented in Theorem \ref{validatedSaddle}. We
% therefore needed a refinement procedure that would allow us to have tighter
% bounds on subsegments of the initial branch segment. An interesting question to
% ask ourselves is how many refinements do we need of the starting segment. }

\subsection{A Hopf bifurcation in an extended Lorenz-84 model}
\label{s:lorenz84}

The extended Lorenz-84 model 
\begin{equation}\label{lorenz84}
\begin{cases}
\dot u_1 = -u_2^2-u_3^2 - a u_1 - a f - b u_4^2\\
\dot u_2 = u_1u_2 - c u_1u_3 - u_2 + d\\
\dot u_3 = c u_1u_2 + u_1u_3 - u_3\\
\dot u_4 = -e u_4 + b u_4u_1 + \mu
\end{cases}
\end{equation}
is a four dimensional system of ODEs with 7 parameters,
see \cite[Section~3.1]{KuznetsovMeijer} and \cite[Section~4.2]{KuznetsovMeijervanVeen}.
%\corrc Elena: I think there are 7 rather than 6, right?! JB <<>>
Inspired by the parameter choices in those papers, we fix 
\begin{align}\label{e:parametervalues}
  a &= 0.25,&
  b &= 0.987,&
 c &= 1, &
 d &= 0.25, &
 e &= 1.04, &
 f &= 2,
\end{align}
and consider $\mu$ as the bifurcation parameter.
The system undergoes two Hopf bifurcations at $\mu\approx 0.05$ and $\mu\approx 0.01$.

\begin{figure}
\begin{center}
\includegraphics[width = \textwidth]{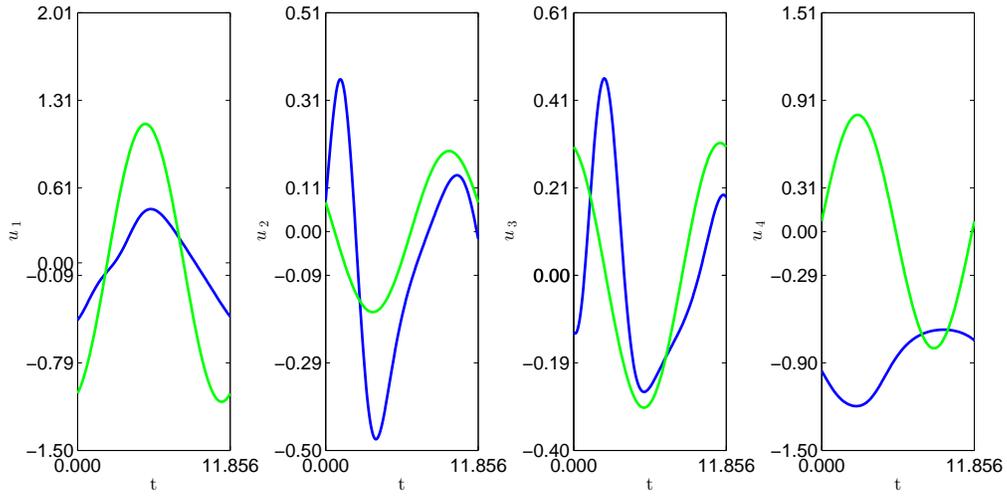}
\caption{The four components of the desingularized periodic solution $\bar u$ corresponding to the extended Lorenz-84 system~\eqref{lorenz84}, in green for $\mu\approx 0.056$ very close to the Hopf bifurcation, and in blue for $\mu\approx -0.0042$ further away from it. The code automatically adds modes when needed; the final (blue) solution has been validated with $K = 53$.
The continuous branch connecting the two solutions has been validated. The horizontal axis represents time in the original system.}
\label{fig_lor84}
% 300 iterations, ~1hour
\end{center}
\end{figure}

\begin{figure}
\begin{center}
\includegraphics[width = \textwidth]{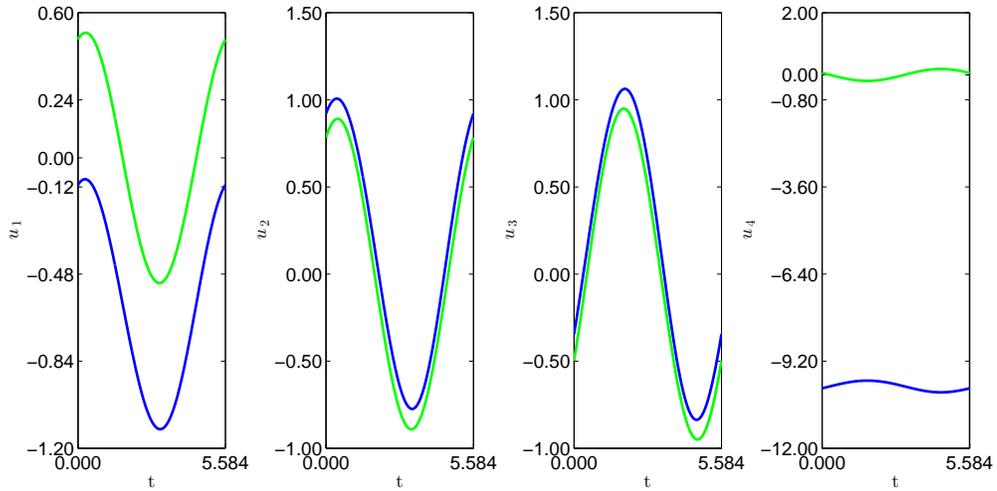}% new
\caption{The four components of the desingularized periodic solution $\bar u$ corresponding to the extended Lorenz-84 system~\eqref{lorenz84}, in green for 
$\mu\approx 0.0109$ 
%$\mu=0.010900160169807$ 
close to the Hopf bifurcation, and in blue for 
$\mu\approx -0.0108$ 
%$\mu=-0.010703201244938$ 
further away from it. 
The continuous branch connecting the two solutions has been validated. The horizontal axis represents time in the original system.}
\label{fig_lor84_2}
\end{center}
\end{figure}

\begin{figure}
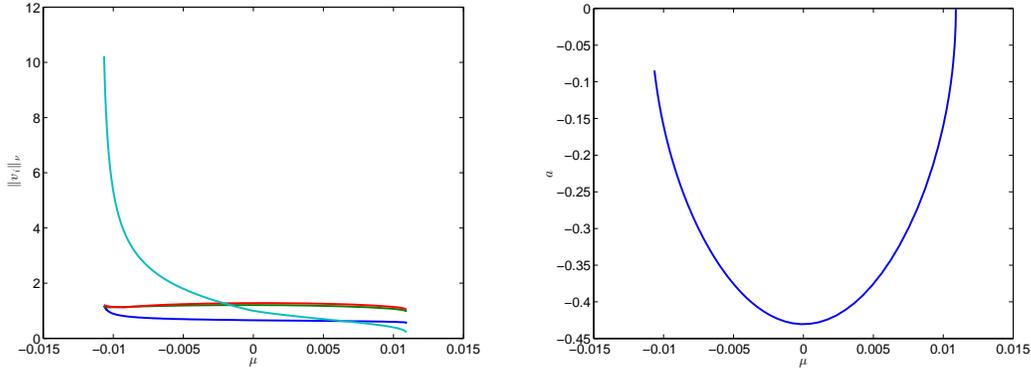

\begin{center}
\includegraphics[width = 0.45\textwidth]{Lorenz84_3_norm}
\includegraphics[width = 0.45\textwidth]{amplitude_Lorenz84_3}
\caption{
In the left graph the norms $\| v_i \|_\nu$, $i=1,2,3,4$ of the components of the solution branch presented in Figure \ref{fig_lor84_2} are plotted with respect to the parameter $\mu$.
On the right, the amplitude $a$ of the desingularized orbit is plotted with respect to the parameter $\mu$. }
\label{fig_lor84_2_norm}
\end{center}
\end{figure}

% \corrc Elena: why not do both then? JB <<>>
% \corrc I couldn't easily find numerically the other one, once I found one I stuck to it. I can look for the other one, if you want. Elena <<>>
% \corrc Thinking about it, I indeed think it would be good to do the other one as well. JB <<>>
% \corrc Elena: The information below is too vague. I propose to report values of $K$ and $\nu$ and intervals for $\mu$ in which we prove the Hopf bifurcation to occur, as well as the approximate equilibrium and the approximate frequency at the bifurcation. JB <<>>
% \blue{it is good enough?}
Applying the approach presented in Section~\ref{s:hopf}, we proved the existence of a Hopf bifurcation at
%\corrc Same comment about accuracy as before. JB <<>>
$$
\mu^* \in 0.05684121 + [-9.1,9.1] \cdot 10^{-6},
$$
by using the computational parameters $K = 5$ and $\nu = 1.1$.
The eigenvalue crossing the imaginary axis is $-0.5300219\:\!\imag + \imag[-4.6,4.6]\cdot 10^{-5}$. The (normalized) period of the solution at the Hopf bifurcation is 
$\tau = -1.886714 + [-1.6,1.6] \cdot 10^{-4}$. The periodic orbits bifurcates from the equilibrium $[1.197556,
  -0.033525,   0.203229,  -0.400337]+ [-1.6,1.6] \cdot 10^{-4}$.
  
Additionally, in the framework of the desingularized Hopf system 
%\corrc Was it the desingularized system or was it also extended to include first and second derivative of the curve? [I don't know why you would do that, really, but it was just not clear to me from the text]. JB <<>>
we continued the periodic orbit and the fixed point solution up to 
$\mu \approx -0.0023$,
where the periodic solution has an amplitude $a \approx 0.9394$.
In Figure~\ref{fig_lor84}, we plotted the desingularized (``blown up'') periodic solution $\bar u$ near the Hopf bifurcation (where it is unimodal) and at the end of the continuation. 

%\corrc that is a very complicated looking orbit for just $K=5$ modes (the blue one). Is it really correct? JB <<>>

The stepsize, that is, the distance between consecutive numerical approximate solutions $\|\hat x_0-\hat x_1\|_X$ used in Figure~\ref{fig_lor84} was relatively large: $h =10^{-3}$. The (refined) step where the Hopf bifurcation is proven to take place goes from amplitude $a = 4.6671\cdot 10^{-6}+[-8.1,8.1]\cdot10^{-8}$ to $a = -5.62556\cdot 10^{-5}+[-8.1,8.1]\cdot10^{-8}$.
%
%\corrc No error bounds for $a$? JB <<>>
%
We can increase the accuracy of locating the Hopf bifurcation point by adopting a smaller stepsize and/or increasing the number of modes used. With $h=10^{-3}$ and $K = 10$, we find
$$
\mu^* \in 0.05684121 + [-6.3,6.3] \cdot 10^{-6},
$$
which reflects a modest improvement.
However, with $h=10^{-5}$ and $K = 10$, we retrieve a much higher accuracy result:
$$
\mu^* \in 0.056841207164 + [-4.4,4.4] \cdot 10^{-10}.
$$
We conclude that decreasing the stepsize is the crucial factor in improving accuracy, whereas the number of modes used is less important, as may be expected since the solution is unimodal at $a=0$.

%\corrc It would be good to check that the accuracy improves for, say, $K=10$. JB <<>>
%\corrc Would also be good to report equilibrium values and frequency/period/eigenvalue at the bifurcation. JB <<>>
%
%Applying the concepts presented previously, we follow the periodic orbit generated by the Hopf bifurcation close to $\mu\approx 0.05$ and validate the Hopf bifurcation. We can then prove that the Hopf bifurcation takes place at
%$$
%\mu=0.0569. 
%$$
%%with validation bound
%%$$
%%\hat r = .
%%$$

The second Hopf bifurcation for the same values of the parameters~\eqref{e:parametervalues} takes place at 
\[
\mu \in 0.010900160+[-3.1, 3.1]\cdot 10^{-7}.
\]
The stationary solution is at $[1.079797955,
  -0.017016937,
    0.230267229,
  -0.423161664]+[-3.1, 3.1]\cdot 10^{-7}$, while the eigenvalue crossing the imaginary axes is $1.1251599\:\!\imag + \imag [-1.6,1.6]\cdot 10^{-5}$. The period of the periodic perturbation near the Hopf bifucation is $\tau = 0.888763098 +[-3.1, 3.1]\cdot 10^{-7}$.
In Figure \ref{fig_lor84_2}, we depict the desingularized periodic solution $\bar u$ near the Hopf bifurcation (where it is unimodal) and after $800$ continuation steps, where 
\[
\mu \in -0.0108027 +[-6.3,6.3]\cdot 10^{-5}.
\]
In Figure \ref{fig_lor84_2_norm}, the norm of the periodic orbit is plotted along the branch with respect to the parameter~$\mu$.
We conclude from the fact that $a$ approaches zero at both ends of the branch,
while the norms $\| v_i\|_\nu$ of the rescaled time-dependent part $\bar{u}$ explode at one end point,
 that the continuous branch of periodic solutions that originates from a Hopf point at $\mu \approx 0.0109$ terminates for $\mu \approx -0.0108$ at another Hopf point on a different branch of equilibria, not connected by continuation.

% \corrc I think we need a bit more detail about continuation of the branches of equilibria, perhaps even an additional figure? JB \blue{what do you want to add about equilibria? E} <<>>

In Figure \ref{f:lorHopf2Hopf} we depict a full continuous branch of periodic orbits of \eqref{lorenz84}, connecting these two Hopf bifurcation points.
The periodic orbits ``cross over'' from one branch of equilibria to another. Hence we use the gluing approach discussed above twice, first to switch from the desingularized system around one equilibrium to continuation of the original system, and once more to switch from the original system to the desingularized system around the other equilibrium. 

%\corrc 
%Elena: need to push the computation a little further (but I cannot run the code). 
%We should also see the other components "blow up" in the picture on the right.
%It would also be good to have the equilibria at the "end" points where $a=0$ (you of course can not quite reach $a=0$ in the blowup-endpoint now, but you probably can get quite close.) JB <<>>

%\corrc I run the computation longer (twice as long), but the last steps are so small little changed. E <<>>
%\corrc OK, just leave it at that for now. JB <<>>

% We can notice how, at the Hopf bifurcation the amplitude vanishes and the norm of the components is roughly one, as expected. Continuing along the orbit, the amplitude first increases, then tends to zero for $\mu$ tending to $-0.01$, while in the same time the norm of the components explodes. This hints to the fact that we are following a heteroclinic orbit tending to a Hopf bifurcation. The amplitude $a$ of the periodic orbit decreases, but the periodic solution $y+az$ converges to a different stationary solution $y'$, thus the norm of $z$ tends to infinity. We have not proven that the orbit connects to another Hopf bifurcation, but we consider the results here presented a good hint towards that conclusion.

The corresponding \textsc{matlab} code is provided in 
\verb+lorenz84_validated_cont.m+.
%\corrc Elena: do you mean $\bar{u}$ by "blown up" ? JB <<>>

%\corrc Elena: In Figure~\ref{fig_lor84_2_norm} on the righ, it would be more intuitive (for the reader) to depict the $a>0$ branch. JB <<>>
%\corrc that is not hte result I have at the moment E<<>>
%\corrc Right, we talked about this, just leave $a<0$ for now. JB<<>>

\subsection{A Hopf bifurcation in a hyperchaotic system}

In \cite{hyper}, the 4-dimensional (hyperchaotic) ODE system 
\begin{equation}\label{e:hyper}
\begin{cases}
\dot u_1 = a (u_1 - u_2) + u_2u_3 + u_4\\
\dot u_2 = - b u_2 + u_1u_3\\
\dot u_3 = - c u_3 + d u_1 + u_1 u_2\\
\dot u_4 = - e (u_1 +u_2)
\end{cases}
\end{equation}
is presented and studied.
The system has many interesting dynamic features, including Hopf bifurcations.
 % A hopf bifurcation is proven to occur at the origin for a given set of parameters.
%For the set of parameter
%$$
%a=0,\quad d\neq 0,\quad e>0, \quad b = c\neq e,
%$$
%a Hopf bifurcation at the origin is proven analytically. 
%\corrc Elena: that is a vague statement (what is the bifurcation parameter?) but also seems irrelevant. JB <<>>
Inspired by the analysis in~\cite{hyper} we fix
$$
b=c =1,\qquad
d=10,\qquad
e = 2,
$$
and use $\mu =  a$ as the bifurcation parameter.
% In this paper, we are interested in studying another Hopf bifurcation that happens far away from the origin for the same parameter choices. In particular, we set
% \corrc Elena: How did you get these values? From~\cite{hyper}?  JB <<>>
% \blue{I honestly set them at random considering the restrictions in the paper \cite{hyper}}
%
%
% \corrc Elena: The information below is too vague. I propose to report values of $K$ and $\nu$ and intervals for $\mu$ in which we prove the Hopf bifurcation to occur, The approximate equilibrium and the approximate frequency at the bifurcation are there, but the error bound $\hat{r}$ is missing. JB <<>>
% \blue{is it enough?}
Then, a Hopf bifurcation occurs at
%\corrc Accuracy of numbers does not make sense. JB <<>>
\[
  \mustar \in -1.01551372619 + [-2.5,2.5] \cdot 10^{-9},
\] 
from the equilibrium 
$$(u_1,u_2,u_3,u_4) \in ( 10.09901951359,
 -10.09901951359,
  -1.00000000000,
  30.61040538783) +  [-2.5,2.5] \cdot 10^{-9}, $$ where the interval notation has been slightly abused.
The normalized period at the Hopf bifurcation is 
$
  \sol{\tau}(\sstar) \in 0.68299909941 + [-2.5,2.5] \cdot 10^{-9}.
$
We continued the solution to $\mu \approx -1.043$, where the amplitude $a \approx 2.149$.
%\corrc Elena: value needed. JB <<>>
In Figure \ref{fig_hyper} we have depicted the desingularized periodic profile $\bar{u}$.
%\corrc Assuming that is what you plot. JB <<>>
%corresponding to~\eqref{e:hyper}. 
For this validation we used $K=15$ and $\nu = 1.1$. 
The corresponding \textsc{matlab} code is provided in 
\verb+main_hyper.m+.

\begin{figure}
\begin{center}
\includegraphics[width = \textwidth]{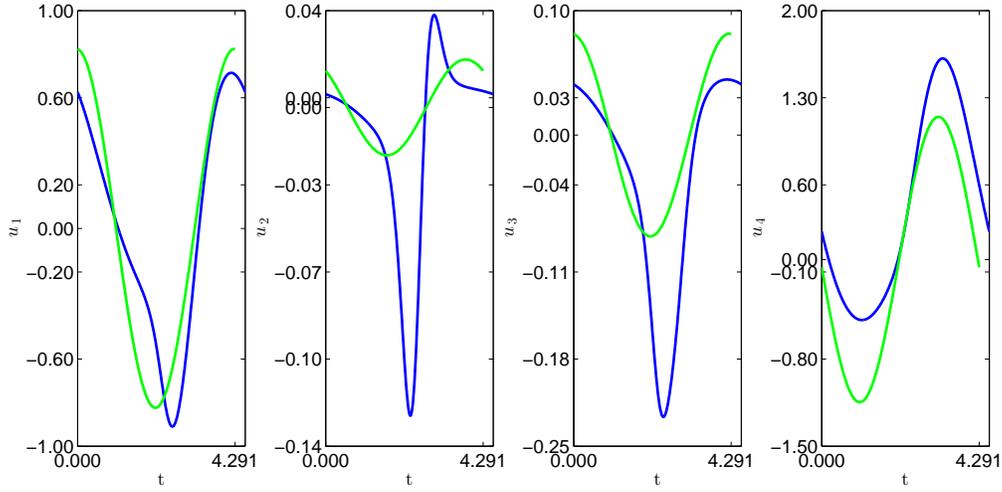}
\caption{
The four components of the desingularized periodic solution $\bar u$ corresponding to the 4-dimensional hyperchaotic system~\eqref{e:hyper}, in green for $\mu=-1.0155$ close to the Hopf bifurcation, and in blue for $\mu\approx -1.7035$ further away from it. 
The continuous branch connecting the two solutions has been validated. The horizontal axis represents time in the original system.}
\label{fig_hyper}
\end{center}
\end{figure}

\subsection{A Hamiltonian example}
\label{s:hamiltonianexample}

Consider the fourth order parabolic partial differential equation (PDE)
\begin{equation}\label{e:fourthPDE}
	u_t = -u_{xxxx} + a u_{xx} + b u + c u^2 + d u^3 .
\end{equation}
This family includes the extended Fisher-Kolmogorov and Swift-Hohenberg equations (see~\cite{PeletierTroy} and the references therein).
When studying stationary states for the problem with periodic or Neumann boundary conditions, this reduces to studying periodic solutions of the ODE 
\begin{equation}\label{e:fourthorder}
  u_{xxxx} =  a u_{xx} + b u + c u^2 + d u^3 .
\end{equation}
Even though it clashes with the notation elsewhere in the paper,
we denote by $x$ the independent variable in the ODE~\eqref{e:fourthorder},
because it fits with the PDE~\eqref{e:fourthPDE}. 
This problem is Hamiltonian with conserved quantity 
\begin{equation}\label{e:Energy}
	E =- u_{xxx}u_{x}+\frac{1}{2}(u_{xx})^2 + \frac{a}{2} (u_{x})^2      
	+\frac{b}{2}u^2+\frac{c}{3}u^3+ \frac{d}{4}u^4.
\end{equation}	
For any $b>0$ periodic orbits, which appear in $1$-parameter families due to the Hamiltonian structure, 
bifurcate from the equilibrium $u=0$ with period 
 $2\pi((a^2/4+b)^{1/2}- a/2)^{-1/2}$.
For the PDE (with periodic or Neumann boundary conditions) this means that nontrivial stationary solutions bifurcate from the trivial state when the domain size is varied.
Although this analysis can be done by hand, we use this example to illustrate how Hamiltonian problems can be brought into the framework of the current paper.

We will fix the parameters $a,b,c,d$ and introduce an \emph{artificial} continuation parameter $\mu$ to turn~\eqref{e:fourthorder} into the first order system
\begin{equation}\label{e:system4}
\begin{cases}
\dot u_1 = u_2\\
\dot u_2 = u_3\\
\dot u_3 = u_4\\
\dot u_4 = a u_3 +  b u_1 + c u_1^2 + d u_1^3 + \mu {u_2} .
\end{cases}
\end{equation}
Irrespective of $\mu$, the equilibria of the system are $u_2=u_3=u_4=0$
and $u_1$ a zero of the polynomial $p(u)=b u + c u^2 + d u^3$,  which also correspond to stationary solutions of~\eqref{e:fourthorder}.
Furthermore, we know \emph{a~priori} that $\mu=0$ for any periodic solution of~\eqref{e:system4}, since (cf.~\eqref{e:Energy})
\[
  0 = \int_0^L \frac{d}{dx} \left(-u_4 u_2 +\frac{1}{2}u_3^2 + \frac{a}{2} u_2^2      
	+\frac{b}{2}u_1^2+\frac{c}{3}u_1^3+ \frac{d}{4}u_1^4\right) dx = - \mu \int_0^L u_2^2 \, dx,   
\]
where $L$ is the period of the solution.
Hence, any periodic orbit of~\eqref{e:system4} corresponds to a periodic solution of~\eqref{e:fourthorder} (and vice versa).
The advantage of studying~\eqref{e:system4} rather than~\eqref{e:fourthorder} is that in the former the Hamiltonian structure/symmetry has been broken, and we can study it using the general continuation and bifurcation techniques from this paper. 

\begin{figure}
\begin{center}
\includegraphics[width = 0.5\textwidth]{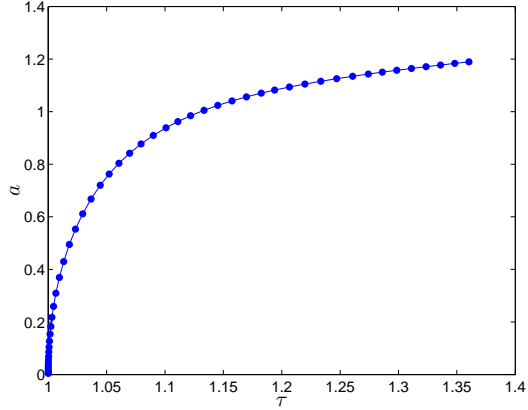}
\caption{The Hopf bifurcation, or fold for the desingularized system, with respect to the normalized period $\tau$, for~\eqref{e:system4} or, equivalently,~\eqref{e:fourthorder}. %\red{Shall we put $\tau$ as the $x$-label? JB}% The amplitude has been rescaled.
% maximum lambda_1 1.000001555814463 
%\red{Elena: the rescaling of $a$ is not necessary; just have smaller scale along the vertical axis. Also, the horizontal axis doesn't look too good. Also need to explicitly state that this is the amplitude $\sol{a}(s)$ and not the parameter $a$ in the system (apologies for the clashing notation).  JB <<>>}
}\label{f:hamilton}
\end{center}
\end{figure}

\begin{figure}
\begin{center}
\includegraphics[width = \textwidth]{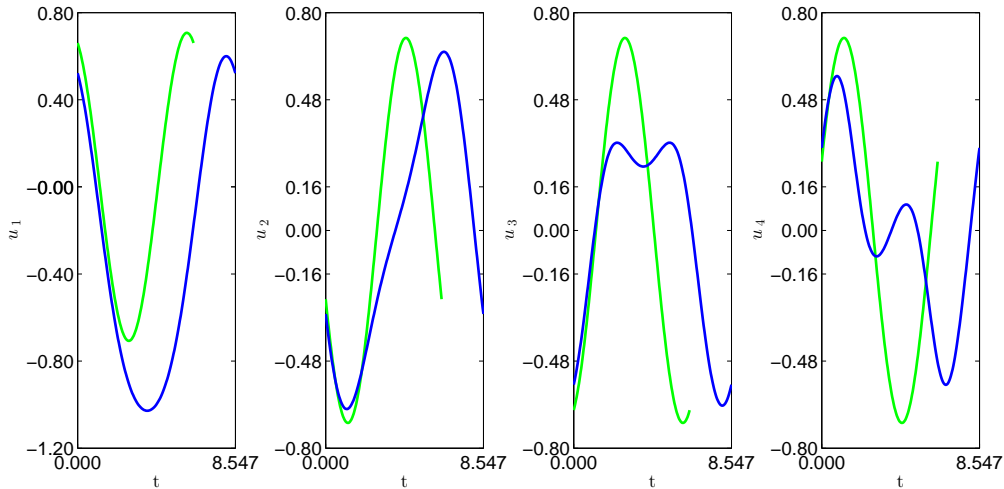}
\caption{The four components of the desingularized periodic solution $\bar u$ corresponding to~\eqref{e:system4}, in green for $\tau= 1$ very close to the Hopf bifurcation, and in blue for $\tau\approx 1.224$ further away from it. 
The continuous branch connecting the two solutions has been validated. The horizontal axis represents time in the original system.
%\red{The nonnormalized period $L=2\pi \tau$; the scaling along of the $x$-axis needs to be repaired. JB }
}
%
% The four components of solution to the desingularized Hopf problem corresponding to \eqref{e:system4} for {$\tau = 1\pm4.1\cdot 10^{-5}$} in green and {$\tau = 1.096285 \pm 4.4\cdot 10^{-5}$} in blue. The time axes hasn't been rescaled, therefore the blue period stops sooner than the green one.}
\label{f:Hamiltonian_sol}
\end{center}
\end{figure}

%\red{Elena: describe result as before}
%Just choose some $a,b,c,d$, let's do $a=2$, $b=3$, $c=1$, $d=-1$.
%\blue{Just as background: $d<0$ makes sense for the PDE; $a=2$, $b=3$ leads to the easy $\tau=1$ at the Hopf bifurcation if I did the algebra correctly; $c=1$ breaks $u\to-u$ symmetry (not so essential, but let's do it nevertheless)}.
%Find Hopf bifurcation (found at $\mu=0$ of course) and do a continuation from there.

In Figure \ref{f:hamilton}, the validated bifurcation is shown for the parameters $a=2$, $b=3$, $c=1$, $d=-1$.
The location of the bifurcation point $\sol{\tau}(\sstar)$ is validated to lie in $1 + [0.4,0.4] \cdot 10^{-4}$
using computational parameters 
$K=5$ and 
$\nu =1.1$
%\corrc Elena:  Values of $K$  and $\nu$ (which I guess is 1?) JB <<>>
(as discussed, it is analytically determined to occur at
$\lambda_1=\tau=1$).
Desingularized profiles are plotted in Figure \ref{f:Hamiltonian_sol}, continued from $\tau=1$ to $\tau \approx 1.224$. 
The corresponding \textsc{matlab} code is provided in 
\verb+main_Hamiltonian.m+.

%\corrc Elena: what "amplitude" is along the $y$-axis in Figure~\ref{f:hamilton}? If it is $a$, why does it not start at $0$? Why does the range of $\tau$ in Figure~\ref{f:hamilton} no correspond to the end point $\tau \approx 1.224$ in Figure~\ref{f:Hamiltonian_sol}. JB <<>>
%\corrc Elena: That plot should then also contain, like the other ones, a solution further away from the bifurcation; otherwise it's just sines and cosines. JB <<>>

%\corrc Elena: reference to code. JB <<>>

%\input{olderhopf}

%\input{biblio}
%\bibliographystyle{plain}
\bibliographystyle{abbrv}
\bibliography{livres}

\end{document}